\documentclass[10pts]{amsart}
\usepackage{amsmath,amsfonts,amsthm,color}
\newtheorem{theorem}{Theorem}
\numberwithin{theorem}{section}
\newtheorem{corollary}[theorem]{Corollary}

\theoremstyle{definition}
\newtheorem{definition}[theorem]{Definition}

\numberwithin{equation}{section}

\newcommand\R{{\mathbb R}}
\newcommand\kk{\kappa}
\newcommand\g{{\mathfrak g}}
\newcommand\oa{{\mathfrak o}}

\newcommand\gl{{\mathfrak gl}}
\newcommand\s{\varsigma}
\newcommand\h{{\mathfrak h}}
\newcommand\m{{\mathfrak m}}
\newcommand\F{{\mathcal F}}

\newcommand\HH{{\mathcal H}}

\newcommand\C{{\mathcal C}}

\newcommand\Lo{{\mathcal L}}

\newcommand\M{{\mathcal M}}

\newcommand\var[1]{\frac{\delta #1} {\delta L}}

\newcommand\K{{\mathcal K}}

\newcommand\pr[2]{\langle #1,#2 \rangle_J}

\newcommand\GL{{\rm GL}}
\newcommand\SL{{\rm SL}}

\newcommand\bok{{\bf k}}

\newcommand\bu{{\bf u}}

\newcommand\rr{{\bf r}}
\newcommand\bs{{\bf s}}
\newcommand\hu{{\hat u}}

\begin{document}
\title[Completely integrable flow on the light cone]
      {A completely integrable flow of star-shaped curves on the light cone in Lorentzian $\R^4$}
\author{T. C. Anderson}
\address{Mathematics Department\\ Brown University\\
 Providence, RI 02912} 
\email{tcanderson@math.brown.edu}
\author{G. Mar\'\i~Beffa}
\thanks{{G. M. B. supported by NSF grant DMS \#0804541}, {T. C. A. supported by UW Madison Hilldale fellowship and the L\&S Summer Honors Senior Thesis grant}}
\address{Mathematics Department\\
         University of Wisconsin\\
         Madison, Wisconsin 53706} 
\email{maribeff@math.wisc.edu}

\keywords{Invariant evolutions of curves, 
Poisson brackets, differential invariants, light cone, conformal sphere,
    completely integrable PDEs, complexly coupled KdV, moving frames} 
\subjclass{Primary: 37K; Secondary: 53A55}
\date{July, 2010}
\maketitle
\centerline{{\small\it In memory of Ron Peters}}
\begin{abstract} In this paper we prove that the space of differential invariants for curves with arc-length parameter in the light cone of Lorentzian $\R^4$, invariants under the centro-affine action of the Lorentzian group, is Poisson equivalent to the space of conformal differential invariants for curves in the M\"obius sphere. We use this relation to find realizations of solutions of a complexly coupled system of KdV equations as flows of curves in the cone. 
\end{abstract}
\section{Introduction} Many - if not most - completely integrable systems posses what is commonly known as geometric realizations; that is, they can be viewed as curve flows in a geometric manifold inducing the integrable system in the curvatures (or differential invariants) of the flow. The best known example is that of the nonlinear Shr\"odinger equation
\[
\phi_t = i \phi_{xx} + \frac i 2|\!|\phi|\!|^2 \phi
\]
whose geometric realization as a 3D Euclidean flow is given by the Vortex-filament flow
\[
 u_t = k B
 \]
via the {\it Hasimoto transformation} $\phi = k e^{i\int \tau dx}$, where $k$ and $\tau$ are the curvature and torsion of the flow $u$ and $B$ is its binormal (see \cite{Ha}). Another well-known example is that of the KdV equation
\[
k_t = k_{xxx} + 3 k k_x
\]
whose realization in the projective line $\mathbb{RP}^1$ is known to be the Schwarzian KdV equation
\[
u_t = u_{xxx} -\frac 32 \frac{u_{xx}^2}{u_x} = u_x S(u)
\]
via the trasnformation $k = S(u) = \frac{u_{xxx}}{u_x} - \frac 32 \frac{u_{xx}^2}{{u_x}^2}$; the invariant $k$ is the Schwarzian derivative of the flow - the only projective differential invariant on the line. One can find a large number of geometric realizations of many different integrable systems in the literature, see for example, \cite{A}, \cite{GR}, \cite{KQ1,KQ2}, \cite{M1}, \cite{TT}, \cite{TU1} and references within.

The second author of this paper described a closer relationship between integrable systems and geometric realizations in \cite{M1}. Indeed, if the geometric manifold is a homogeneous manifold of the form $G/H$, where $G$ is semisimple, the space of differential invariants is naturally endowed with a Hamiltonian structure, one she called the {\it geometric Poisson bracket} for curve flows. This bracket is obtained via the reduction of a well-known bracket defined on the manifold of loops in the dual of the Lie algebra $\g$, to a quotient of this manifold. A second compatible bracket will not reduce in general, and its reduction indicates the existence of a geometric realization of an associated integrable system, as explained in \cite{M1}. Both the nonlinear Schr\"odinger equation and the KdV equation are biHamiltonian systems and their structures are obtained when reducing these two general brackets to the space of Euclidean and projective invariants, respectively. Other such cases include modified KdV, Sine Gordon, Sawada-Koterra, Gelfan'd-Dikii flows, etc.

In \cite{CIM}, the authors showed that one can consider planar curves $\gamma$ as curves in $\mathbb{RP}^1$ (via a simple projectivization) insofar as the curve is star-shaped, that is $\gamma$ and $\gamma'$ remain independent and the curve never points towards the radial direction. Using this relation they proceeded to show that completely integrable systems with projective realizations (for example KdV) also have realizations as flows of star-shaped curves, invariant under the centro-affine action of $\SL(2)$. The realization of KdV as a flow of star-shaped curves was previously discovered by Pinkall in \cite{P}. The authors of \cite{CIM} proved that, if we assume that the planar curves are parametrized by centro-affine arc-length parameter, the space of differential invariants in both centro-affine plane and the projective line were Poisson equivalent when endowed by the geometric Poisson structures. This allowed them to establish planar geometric realizations by star-shaped flows not only for KdV, but for any projective Hamiltonian system, including, for example, the Sawada-Koterra equation. They also used centro-affine moving frames along the flow to find realizations of solitons solutions of KdV as star-shaped planar solitons.

In this paper we look at a related situation involving projectivization. It is well-known (see for example \cite{GS}) that the conformal M\"obius sphere $S^2$ is geometrically equivalent to the projectivization  of the light cone in Lorentzian $\R^4$, both homogeneous spaces. Furthermore, a familiar completely integrable system - the complexly coupled system of KdV equations - is known to have a conformally invariant realization in the M\"obius sphere (see \cite{M3}). Thus, it is natural to ask whether or not the coupled system has a realization as flows in the cone, invariant under Lorentzian transformations. 

In section 3 we classify differential invariants for curves in the cone under the centro-affine Lorentzian action, and we find a group-based moving frame along them. Using our moving frame, we write a formula for a general evolution of curves in the cone and identify those that preserve arc-length. We then find the Lorentzian Poisson structure explicitly. In section 4 we review the same study for conformal curves (the study was done in \cite{M2} for the general conformal case), and we match it to the Lorentzian case. In section 5 we prove that, if  curves in the cone are parametrized by Lorentzian centro-affine arc-length, then projectivization takes differential invariants in the cone to differential invariants in the sphere in a 1-to-1 fashion. It also takes evolutions in the cone that are invariant under Lorentzian transformations and preserve arc-length to conformally invariant evolutions of curves in $S^2$. And vice versa, any projective curve in the sphere can be uniquely lifted to a curve in the cone, parametrized by arc-length and with equal invariants; and any projective evolution can be lifted to an evolution in the cone that preserves arc-length.

In section 6 we finally show that if we assume that the curves in the cone are parametrized by arc-length, the space of invariants in both the cone and the sphere are Poisson equivalent. In fact, the Poisson bracket defined on Lorentzian invariants foliates into Poisson submanifolds according to the value of the arc-length invariant $k_0$. If we fix the value $k_0 =1$, then the resulting Poisson structure is identical to that of the conformal invariants. This relation will allow us to find geometric realizations in both sphere and cone for any system that is Hamiltonian with respect to the geometric Poisson structure, in particular that of the complexly coupled KdV system. To conclude the paper we provide the Lorentzian realization of the coupled KdV system and describe how to lift soliton solutions to the cone.

\section{Notation, definitions and background}

\subsection{Moving frames, differential invariants and invariant flows}
The classical concept of moving frame
was developed by \'Elie Cartan (\cite{C1}). A classical moving frame along a curve
in a manifold $M$  is a curve in the frame bundle of the manifold over the curve, invariant under
the action of the transformation group under consideration. This method is a very powerful tool, but its 
explicit application relied on intuitive choices that were not clear in a general setting.
Some ideas in Cartan's work and later work of Griffiths (\cite{Gri}), Green (\cite{Gre}) and others laid the
 foundation for the concept of a group-based moving frame, that is, an equivariant map between 
 the jet space of curves in the manifold and the group of transformations. Recent work by 
 Fels and Olver (\cite{FO1, FO2}) finally gave the precise definition of the group-based moving
 frame and extended its application beyond its original geometric picture to a 
 large number of applications.  In this section we will describe Fels and Olver's moving frame
and its relation to the classical moving frame. We will also introduce
 some definitions that are useful for the study of  Poisson brackets  and
biHamiltonian nonlinear PDEs. From now on we will assume $M = G/H$ with $G$
 acting on $M$ via left multiplication on representatives of a class. We will also assume that curves in $M$ are {\it parametrized}
 and, therefore, the group $G$ does not act on the parameter.
 
 \begin{definition}{\rm 
Let $J^k(\R,M)$ the space of $k$-jets of  curves, that is,  the set of equivalence classes of curves in 
$M$ up to $k^{\rm th}$ order of contact. If we denote by $u(x)$ a curve in $M$, the jet space has local 
coordinates that can be represented by $u^{{\bf (k)}}= (x, u, u', u'', \dots, u^{(k)})$. 
The group $G$ acts naturally on parametrized curves, therefore it acts naturally
on the jet space via the formula
\[
g\cdot u^{{\bf (k)}} = (x, g\cdot u, (g\cdot u)', (g\cdot u)'', \dots )
\]
where by $(g\cdot u)^{(k)}$ we mean the formula obtained when one differentiates $k$ times $g\cdot u$ and 
then writes the result in terms of
$g$, $u$, $u'$, etc. This is usually called the {\it prolonged} action of $G$ on $J^k(\R,M)$.}
 \end{definition}
\begin{definition} {\rm A function 
\[
I: J^k(\R,M) \to \R
\]
is called a $k$th order {\it differential invariant} if it is invariant with respect to the prolonged action of $G$.}
\end{definition}
\begin{definition}{\rm 
A map 
\[
\rho: J^k(\R,M) \to G
\]
is called a left (resp. right)  {\it moving frame} if it is equivariant with respect to the prolonged action of $G$
on $J^k(\R,M)$ and the left action (resp. right inverse action) of $G$ on itself. }
\end{definition}
If a group acts (locally) {\it effectively on subsets}, then for $k$ large enough the prolonged action is locally free on regular jets. This guarantees the existence of a moving frame on a neighborhood of a regular jet (for example, on a neighborhood of a generic curve, see \cite{FO1, FO2}). 

 The group-based moving frame already appears in a familiar method for calculating the curvature of a curve $u(s)$ in the 
Euclidean plane. In this method one uses a translation to take $u(s)$ to the origin, and a rotation to make one of the axes tangent to the curve. The curvature can classically be found as the 
 coefficient of the second order term in the expansion of the curve around $u(s)$. The crucial observation made by Fels and Olver is that {\it the element of the group} carrying out the translation and rotation depends on $u$ and its derivatives and so it defines a map from the jet space to the group. {\it This map is  a right moving frame},
 and it carries all the geometric information of the curve. In fact, Fels and Olver developed 
 a similar normalization process to find right moving frames (see \cite{FO1, FO2} and our next  Theorem).
\begin{theorem} \label{norm}(\cite{FO1, FO2}) Let $\cdot$ denote the prolonged
action of the group on $u^{(k)}$ and assume we have {\it normalization equations} of the form
\[
g \cdot u^{(k)} = c_k
\]
where $c_k$ are constants (they are 
called \emph{normalization constants}). 
Assume we have enough normalization equations so as to determine $g$ as a function of $u, u', \dots$. Then $g = \rho$ is a \emph{right moving frame}.
\end{theorem}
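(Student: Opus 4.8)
The plan is to show directly that the map $\rho$ implicitly defined by the normalization equations satisfies the equivariance property characterizing a right moving frame. The hypothesis that there are enough normalization equations to solve for $g$ means precisely that, for each jet $u^{(k)}$ in the relevant regular neighborhood, there is a unique group element --- which we name $\rho(u^{(k)})$ --- with $\rho(u^{(k)}) \cdot u^{(k)} = c_k$. So the first step is simply to record this characterizing property: $\rho(u^{(k)})$ is the unique $g \in G$ carrying $u^{(k)}$ to the fixed cross-section value $c_k$ under the prolonged action.

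Next I would fix $h \in G$ and apply this characterization to the transformed jet $h \cdot u^{(k)}$: by definition $\rho(h \cdot u^{(k)})$ is the unique $g$ with $g \cdot (h \cdot u^{(k)}) = c_k$. Since $\cdot$ is a prolonged group action, $g \cdot (h \cdot u^{(k)}) = (gh) \cdot u^{(k)}$, so this equation reads $(gh) \cdot u^{(k)} = c_k$. Comparing with $\rho(u^{(k)}) \cdot u^{(k)} = c_k$ and invoking uniqueness forces $gh = \rho(u^{(k)})$, whence
\[
\rho(h \cdot u^{(k)}) = \rho(u^{(k)})\, h^{-1}.
\]
This is exactly equivariance with respect to the prolonged action on the source and the right inverse action on the target, so $\rho$ is a right moving frame.

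The substantive content is therefore not the algebraic step above but the existence and uniqueness of solutions to the normalization equations --- that is, the well-definedness and smoothness of $\rho$ --- and this is where I expect the main obstacle to lie. It is exactly what the hypothesis ``enough normalization equations to determine $g$'' asserts, in conjunction with the local freeness of the prolonged action on regular jets recalled just before the statement. Concretely, one invokes the implicit function theorem on a neighborhood of a regular jet to set the chosen coordinate functions of $g \cdot u^{(k)}$ equal to the constants $c_k$ and solve smoothly for $g$; local freeness guarantees that no residual isotropy destroys uniqueness. Once $\rho$ is well defined, the equivariance computation completes the proof.
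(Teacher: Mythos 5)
The paper gives no proof of this statement; it is quoted verbatim from Fels--Olver \cite{FO1, FO2}, and your argument is precisely the standard cross-section argument underlying their construction: uniqueness of the solution of $g\cdot u^{(k)}=c_k$ plus the composition law $g\cdot(h\cdot u^{(k)})=(gh)\cdot u^{(k)}$ yields $\rho(h\cdot u^{(k)})=\rho(u^{(k)})h^{-1}$, which is right equivariance. Your closing remarks correctly locate the real content in the local existence, uniqueness, and smoothness of $\rho$, guaranteed by local freeness of the prolonged action on regular jets together with the implicit function theorem, so the proof is complete and matches the cited source's approach.
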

The direct relation between classical moving frames and group-based moving frames is stated 
in the following theorem.

\begin{theorem} \label{classical}(\cite{M2}) Let $\Phi_g: G/H \to G/H$ be defined by the action of  $g\in G$. That is
$\Phi_g([x]) = [g x]$. Let $\rho$ be a group-based left moving frame with $\rho \cdot o = u$
where $o = [H] \in G/H$. Let $e_i$, $i=1,\dots,n$ be generators of the vector space $T_o G/H$. Then, $T_i = d\Phi_\rho(o)e_i$ form a classical moving frame.
\end{theorem}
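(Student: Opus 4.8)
The plan is to verify the two defining properties of a classical moving frame for the collection $\{T_i\}$: first, that at each point $u$ along the curve the vectors $T_i$ genuinely form a frame (a basis of $T_u M$), and second, that this frame transforms equivariantly under the prolonged $G$-action. The frame property is essentially immediate, so the substance of the argument is the equivariance, which I would reduce to the chain rule together with the homomorphism property of the assignment $g \mapsto \Phi_g$ and the defining equivariance of the \emph{left} moving frame $\rho$.

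First I would check the frame property. Since $\Phi_\rho : G/H \to G/H$ is a diffeomorphism, its differential $d\Phi_\rho(o) : T_o(G/H) \to T_{\Phi_\rho(o)}(G/H)$ is a linear isomorphism; and because $\rho \cdot o = u$ we have $\Phi_\rho(o) = u$, so the target is $T_u(G/H) = T_u M$. As the $e_i$ form a basis of $T_o(G/H)$, their images $T_i = d\Phi_\rho(o)e_i$ form a basis of $T_u M$, so $\{T_i\}$ is a genuine frame along the curve.

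Next, the equivariance. I want to show that when the jet $u^{(k)}$ is moved by $g \in G$ to $g\cdot u^{(k)}$, the associated frame satisfies $T_i(g\cdot u) = d\Phi_g(u)\, T_i(u)$, which is precisely the invariance condition defining a classical moving frame. The key observation is that $g \mapsto \Phi_g$ is a left action, so $\Phi_{g\rho} = \Phi_g \circ \Phi_\rho$, and that since $\rho$ is a left moving frame its value at $g\cdot u^{(k)}$ is $\rho(g\cdot u^{(k)}) = g\,\rho(u^{(k)})$. Combining these with the chain rule and $\Phi_\rho(o)=u$ gives
\[
T_i(g\cdot u) = d\Phi_{g\rho}(o)e_i = d\bigl(\Phi_g \circ \Phi_\rho\bigr)(o)e_i = d\Phi_g\bigl(\Phi_\rho(o)\bigr)\circ d\Phi_\rho(o)e_i = d\Phi_g(u)\,T_i(u),
\]
which is the required equivariance.

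The step that demands the most care is pinning down the correct equivariance condition that a classical moving frame must obey and confirming that it is the \emph{left} moving frame, with its property $\rho(g\cdot u)=g\rho(u)$, that feeds cleanly into the chain-rule computation. A right moving frame would instead invert the group element and break the homomorphism identity $\Phi_{g\rho}=\Phi_g\circ\Phi_\rho$, so the left/right bookkeeping is the only genuine subtlety. Once the conventions are aligned, the result is a direct application of the chain rule.
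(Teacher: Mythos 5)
Your argument is correct: the frame property follows from $d\Phi_\rho(o)$ being an isomorphism onto $T_u(G/H)$, and the equivariance computation $d\Phi_{g\rho}(o) = d\Phi_g(u)\circ d\Phi_\rho(o)$ combined with the left-equivariance $\rho(g\cdot u^{(k)}) = g\,\rho(u^{(k)})$ is exactly the right mechanism, including your (correct) observation that the left/right convention is the only delicate point. The paper itself states this theorem without proof, citing \cite{M2}, so there is no internal argument to compare against; your proof is the standard one and fills that gap adequately, the only cosmetic omission being an explicit remark that smoothness of $x\mapsto T_i(x)$ along the curve follows from smoothness of the moving frame $\rho$ on the jet space.
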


We will next describe the equivalent to the classical Serret-Frenet equations. This concept
is fundamental in our Poisson geometry study. 

\begin{definition}{\rm Consider $K dx$ to be the horizontal component of the pullback of the left (resp. right)
Maurer-Cartan form of the group $G$ via a group-based left (resp. right)  moving frame $\rho$. That is
\[
K = \rho^{-1} \rho_x \in \g \hskip4ex ({\rm resp.} \hskip 2ex K = \rho_x\rho^{-1})
\]
 and $K$ describes the first order differential equation satisfied by $\rho$. We call $K$ the left (resp. right) {\em Maurer-Cartan} element of the algebra (or Maurer-Cartan matrix if $G\subset \GL(n,\R)$), and $ \rho_x = \rho K$ the {\it left (resp. right) Serret-Frenet equations} for the moving frame $\rho$.}
\end{definition}
Notice that, if $\rho$ is a left moving frame, then $\rho^{-1}$ is a right moving frame and their Serret-Frenet equations are the negative of each other. A complete set of 
generating differential invariants
 can always be found among the
coefficients of group-based Serret-Frenet equations generated by normalization equations. The following
Theorem can be found in
 \cite{Hu}.

\begin{theorem}  Let $\rho$ be a (left or right) moving frame
 along a curve $u$. Then, the coefficients of the (left or right) Serret-Frenet equations
for $\rho$ contain a basis for the space of differential  invariants of the curve. That is, any other differential 
invariant for the curve is a function of the generators of $K$ (its entries if $G\subset \GL(n,\R)$) and their derivatives with respect to $x$.
\end{theorem}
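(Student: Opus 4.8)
The plan is to prove the two inclusions separately: that each entry of $K$ is a differential invariant, and conversely that every differential invariant is a function of these entries and their $x$-derivatives. The first inclusion I would dispatch directly from equivariance. Working, say, with a right moving frame $\rho$, equivariance reads $\rho(g\cdot u^{(k)}) = \rho(u^{(k)})\,g^{-1}$ for constant $g\in G$, and since $g$ is independent of $x$,
\[
K(g\cdot u) = \partial_x\bigl(\rho g^{-1}\bigr)\bigl(\rho g^{-1}\bigr)^{-1} = \rho_x\,g^{-1}g\,\rho^{-1} = \rho_x\rho^{-1} = K(u).
\]
Thus $K$, and hence each of its entries when $G\subset\GL(n,\R)$, is invariant under the prolonged action; the left case is identical using $K=\rho^{-1}\rho_x$.

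For the converse I would introduce the invariantization map attached to the frame. Set $\iota(u^{(k)}) = \rho(u^{(k)})\cdot u^{(k)}$, the normalized jet. Equivariance again shows $\iota(u^{(k)})$ is a vector of differential invariants, whose components indexed by the normalization equations equal the constants $c_k$ (the \emph{phantom} invariants), the remaining components being the genuinely varying \emph{normalized invariants}. The decisive observation is that invariantization fixes invariants: evaluating the identity $I(g\cdot u^{(k)})=I(u^{(k)})$ at the particular group element $g=\rho(u^{(k)})$ gives $I(u^{(k)}) = I\bigl(\iota(u^{(k)})\bigr)$. Because the phantom components are constant, this already exhibits any differential invariant $I$ as a function of the normalized invariants alone.

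It then remains to express every normalized invariant through the entries of $K$ and their derivatives, and this is where I expect the main work to lie. The key tool is the Fels--Olver recurrence relation, which measures the failure of invariantization to commute with differentiation:
\[
\partial_x\,\iota(F) = \iota(\partial_x F) + \sum_\alpha R^\alpha\,\iota\bigl(\mathbf{v}_\alpha(F)\bigr),
\]
where the coefficients $R^\alpha$ are precisely the components of $K$ in a basis of $\g$ and the $\mathbf{v}_\alpha$ are the prolonged infinitesimal generators of the action. Applying this with $F=u^{(j)}$, and noting that $\iota(\mathbf{v}_\alpha(u^{(j)}))$ involves only normalized invariants of order at most $j$ while $\iota(\partial_x u^{(j)})=\iota(u^{(j+1)})$, one solves for each order-$(j+1)$ normalized invariant in terms of the order-$j$ ones, one $x$-differentiation, and algebraic combinations of the entries of $K$. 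Since $M=G/H$ is homogeneous the order-zero normalized invariant $\iota(u)$ is the constant base point $o$, so the induction starts from constants and feeds upward, writing every normalized invariant — and hence, with the previous paragraph, every differential invariant — as a differential polynomial in the entries of $K$. The delicate points I would treat most carefully are the derivation of the recurrence itself and the verification that the phantom relations (those whose left-hand sides vanish, coming from the constancy of the $c_k$) are consistent and suffice to pin down the $R^\alpha$; this solvability is exactly where the freeness of the prolonged action — the regularity hypothesis that guarantees the existence of $\rho$ — is used.
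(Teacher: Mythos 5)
Your argument is correct, but there is nothing in the paper to compare it against: this theorem is stated without proof and attributed to Hubert \cite{Hu}. What you have written is essentially the standard Fels--Olver/Hubert argument from that literature: equivariance of $\rho$ under the constant group element $g$ gives invariance of $K$ directly; the replacement rule $I(u^{(k)})=I(\iota(u^{(k)}))$ reduces every differential invariant to a function of the normalized invariants; and the recurrence relation --- which is exactly the paper's Theorem \ref{Knorm}, equation (\ref{Knor}), in the form $I_{r+1}^\alpha=(I_r^\alpha)'-(K\cdot I_r)^\alpha$ --- runs the induction upward from the constant $\iota(u)=c_0$. One remark on your closing paragraph: for the generation direction the recurrence is used only to \emph{express} $I_{r+1}$ in terms of $I_r$, one $x$-derivative, and the entries of $K$, which is immediate substitution and requires no solvability. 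The solvability of the phantom relations for the Maurer--Cartan coefficients (where local freeness of the prolonged action enters) is needed only if one wants to \emph{compute} $K$ from the normalizations; since your first paragraph already establishes the invariance of $K$ independently, that ``delicate point'' is not load-bearing for this theorem.
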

If $\rho$ is obtained through normalizations, there are formulas relating $K$ directly to the invariantization of jet
coordinates. 
They are called recurrence formulas in \cite{FO1}, and the theorem below is the adaptation of their result to our particular case, and can be found in \cite{M1}.
 \begin{theorem}\label{Knorm} Assume a right invariant moving frame is determined by the normalization equations
\[
\left(g\cdot u^{(r)}\right)^\alpha = c_r^\alpha
\]
for some choices of $\alpha$ and $r$, where $c_r = (c_r^\alpha)$ ($\alpha$ indicates individual
coordinates). Let $K = \rho_x \rho^{-1}$ be the associated Maurer-Cartan element. Let $I_r^\alpha = \rho\cdot (u^\alpha)^{(r)}$ for any $r = 0,1,2, \dots$ and any
$\alpha = 1,\dots, \dim M$. Then $K$ is determined by the equations
\begin{equation}\label{Knor}
\left(K\cdot I_{r}\right)^\alpha = -I_{r+1}^\alpha + \left(I_r^\alpha\right)' ,
\end{equation}
where $K\cdot u^{(r)}$ denotes the prolonged infinitesimal action of the Lie algebra on $J^{r}(\R, \M)$.
\end{theorem}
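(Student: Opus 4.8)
The plan is to establish (\ref{Knor}) by differentiating the invariantized jet $I_r(x) = \rho(x)\cdot u^{(r)}(x)$ in $x$ and separating the two sources of $x$-dependence, the jet $u^{(r)}(x)$ and the moving frame $\rho(x)$. The chain rule gives
\[
\frac{d}{dx}I_r = \Bigl(\tfrac{d}{dx}\bigl(g\cdot u^{(r)}\bigr)\Bigr)\Big|_{g=\rho} + \frac{d}{dt}\Big|_{t=0}\bigl(g(t)\cdot u^{(r)}\bigr),
\]
where in the first term the group element is frozen at $g=\rho$ and only the jet is differentiated, and in the second term $g(t)$ is any curve in $G$ with $g(0)=\rho$ and $g'(0)=\rho_x$ while the jet is held fixed. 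Identifying these two terms is the whole content of the proof.

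For the first term I would invoke the defining compatibility between the prolonged action and total differentiation: for a constant $g$, differentiating $g\cdot u^{(r)}$ in $x$ merely advances the jet, $\frac{d}{dx}\bigl(g\cdot u^{(r)}\bigr) = g\cdot u^{(r+1)}$, since the prolongation is constructed precisely as the jets of the transformed curve. Evaluating at $g=\rho$ gives $\rho\cdot u^{(r+1)} = I_{r+1}$, so this term contributes $I_{r+1}^\alpha$ in the $\alpha$ coordinate.

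For the second term I would use $\rho_x = K\rho$ and choose $g(t) = \exp(tK)\rho$, which satisfies $g(0)=\rho$ and $g'(0)=K\rho=\rho_x$. Since the prolonged action of $G$ on the jet space is a left action, $\bigl(\exp(tK)\rho\bigr)\cdot u^{(r)} = \exp(tK)\cdot I_r$, and differentiating at $t=0$ yields exactly the prolonged infinitesimal action of $K$ at the point $I_r$, namely $K\cdot I_r$. Adding the two contributions gives $(I_r^\alpha)' = I_{r+1}^\alpha + (K\cdot I_r)^\alpha$, which is (\ref{Knor}). To see that these equations \emph{determine} $K$, substitute the normalizations $I_r^\alpha = c_r^\alpha$ for the normalized coordinates; the $c_r^\alpha$ are constant, so their derivatives vanish and (\ref{Knor}) reduces to the linear system $(K\cdot c_r)^\alpha = -I_{r+1}^\alpha$ in the unknown entries of $K$. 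Since by hypothesis the normalization equations suffice to determine $\rho$ on regular jets, this system has enough independent equations to solve for every coefficient of $K$ in terms of the higher invariants $I_{r+1}^\alpha$.

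I expect the only genuine subtlety to be in the first term, namely justifying the interchange of total differentiation with prolongation while the group element is held fixed, together with careful bookkeeping of left versus right conventions so that the infinitesimal direction $\rho_x = K\rho$ used for the second term is consistent with the right Maurer--Cartan choice $K = \rho_x\rho^{-1}$. Once these conventions are fixed, the statement follows from a single application of the product and chain rules.
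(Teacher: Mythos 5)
Your proof is correct and is exactly the standard derivation of the recurrence formulas: the chain-rule decomposition of $\frac{d}{dx}\left(\rho\cdot u^{(r)}\right)$ into the frozen-frame contribution $I_{r+1}$ (using that total differentiation advances the prolongation when $g$ is held constant and $G$ does not act on the parameter) plus the infinitesimal-action contribution $K\cdot I_r$ (using $\rho_x=K\rho$ and the left-action property), followed by substituting the constant normalizations to get a linear system for $K$. The paper itself gives no proof of this theorem --- it quotes it from \cite{M1} as an adaptation of the recurrence formulas of \cite{FO1} --- and your argument faithfully reconstructs that derivation, with the right Maurer--Cartan convention handled consistently.
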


Notice that $I_r^\alpha = c_r^\alpha$ for the normalized components, otherwise they are differential invariants.

If one has a group-based moving frame, then one also has a general formula for an evolution of curves in $G/H$, invariant under the action of $G$ (that is, so that $G$ takes solutions to solutions).
 
 \begin{theorem}(\cite{M1})\label{invev} Let $u(x,t)$ be a one parameter family of curves in $G/H$; let $\rho$ be a {\bf left} group-based moving frame and let $d\Phi_\rho(o) e_i = T_i$ be an associated classical moving frame. Then, any evolution of curves in $G/H$ invariant under the action of $G$ can be written as
 \begin{equation}\label{guev}
 u_t = r_1 T_1+\dots r_n T_n = d\Phi_\rho(o)\rr 
 \end{equation}
 where $\rr = (r_1,\dots, r_n)^T$ and $r_i$ are differential invariants, that is, functions of the entries of $K$ and their derivatives.
 \end{theorem}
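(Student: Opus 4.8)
The plan is to split the claim into two pieces. The decomposition $u_t=\sum_i r_iT_i$ holds for \emph{any} evolution, invariant or not: since $\{e_i\}$ is a basis of $T_o(G/H)$ and $d\Phi_\rho(o)$ is a linear isomorphism onto $T_u(G/H)$ (recall $u=\rho\cdot o$), the vectors $T_i=d\Phi_\rho(o)e_i$ form a moving basis of the tangent space along the curve, so the tangent vector $u_t$ expands uniquely as $u_t=\sum_i r_iT_i$ with coefficients $r_i=r_i(u^{(k)})$ depending on the jet of $u$. The substance of the theorem is that invariance of the flow forces these $r_i$ to be differential invariants.

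First I would write the evolution as $u_t=F(u^{(k)})$, where $F$ assigns to a jet a tangent vector based at the curve point $u$, and rephrase "$G$-invariant" as the equivariance
\[
F(g\cdot u^{(k)})=d\Phi_g(u)\,F(u^{(k)}),\qquad g\in G.
\]
This is precisely the statement that $g$ carries solutions to solutions: if $u(x,t)$ is a solution, then differentiating $g\cdot u(x,t)$ in $t$ (with $g$ constant in $x$ and $t$) gives $(g\cdot u)_t=d\Phi_g(u)\,u_t$, which must agree with $F$ evaluated on the prolonged curve $g\cdot u^{(k)}$.

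Next I would check that the moving basis obeys the \emph{same} transformation law. Left-equivariance of the frame gives $\rho(g\cdot u^{(k)})=g\,\rho(u^{(k)})$, and the left-action identity $\Phi_{gh}=\Phi_g\circ\Phi_h$ together with the chain rule yields $d\Phi_{g\rho}(o)=d\Phi_g(u)\circ d\Phi_\rho(o)$, whence $T_i(g\cdot u^{(k)})=d\Phi_g(u)\,T_i(u^{(k)})$. Substituting this transformation law and the expansion of $F$ into the equivariance identity of the previous step, and using that $d\Phi_g(u)$ is invertible, I obtain $\sum_i r_i(g\cdot u^{(k)})T_i(u^{(k)})=\sum_i r_i(u^{(k)})T_i(u^{(k)})$; since the $T_i(u^{(k)})$ are linearly independent this forces $r_i(g\cdot u^{(k)})=r_i(u^{(k)})$ for every $g$ and every $i$, so each $r_i$ is a differential invariant. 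The earlier theorem that the Serret--Frenet coefficients generate all differential invariants then lets me write each $r_i$ as a function of the entries of $K$ and their $x$-derivatives, as claimed.

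I expect the only genuine care to be needed in the first step: formulating "invariant evolution" precisely as an equivariance of the tangent-vector-valued map $F$ — with the prolonged action on the domain and the tangent-lifted action $d\Phi_g$ on the target — and keeping the left-versus-right moving-frame conventions consistent throughout. Once that is set up, the identification of the transformation law of $T_i$ and the cancellation argument are routine linear algebra.
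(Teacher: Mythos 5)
The paper states this theorem without proof, citing \cite{M1}, and your argument is correct and is essentially the standard one given there: the $T_i=d\Phi_\rho(o)e_i$ form a frame of $T_u(G/H)$ because $\rho\cdot o=u$, the left-equivariance $\rho(g\cdot u^{(k)})=g\,\rho(u^{(k)})$ combined with $\Phi_{g\rho}=\Phi_g\circ\Phi_\rho$ makes the $T_i$ transform by $d\Phi_g(u)$ exactly as $u_t$ does, and linear independence then forces the coefficients $r_i$ to be invariant, hence (by the cited generation theorem for Serret--Frenet coefficients) functions of the entries of $K$ and their derivatives. No gaps.
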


Finally, if $\rho$ is a left moving frame along the flow $u(t,x)$, solution of (\ref{guev}), the entries $r_i$ determine the entries of $N = \rho^{-1}\rho_t$ in a very precise way, where $\rho_t$ is the evolution induced on $\rho$ by (\ref{guev}). 

\begin{theorem}\label{Nsplit} (\cite{M1}) Assume $M = G/H$ and let $\s:G/H\to G$ be a section, that is $\pi(\s(x)) = x$. Assume $\h\oplus\m$, where $\h$ is the subalgebra of $H$ and $m = \s_\ast(T_o G/H)$ is a complement to $\h$ as vector subspaces. Let $N = N_h+N_m$ according to the splitting of the algebra.

Then
\[
N_m = d\s(o)\rr, 
\]
where $\rr$ is given in (\ref{guev}).
\end{theorem}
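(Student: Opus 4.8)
The plan is to differentiate the defining relation $u=\rho\cdot o = \Phi_\rho(o)$ of the left moving frame with respect to $t$ and to read off the $\m$-component of $N=\rho^{-1}\rho_t$ by comparing the result with the invariant evolution (\ref{guev}). Since $\rho$ is a \textbf{left} moving frame, we have $\rho\cdot o = u$ (as in Theorem \ref{classical}) and $\rho_t=\rho N$, and everything reduces to the chain rule together with the fact that the isotropy algebra $\h$ acts trivially at $o$.

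First I would set up the relevant linear algebra. Write $\pi:G\to G/H$ for the projection $\pi(g)=g\cdot o$, and let $\psi:=d\pi(e):\g\to T_o(G/H)$ be its differential at the identity; by definition $\psi(\xi)=\frac{d}{d\epsilon}\big|_{0}\exp(\epsilon\xi)\cdot o$ is the infinitesimal action of $\xi$ at $o$, whose kernel is exactly the isotropy algebra $\h$. Hence, under the splitting $\g=\h\oplus\m$, the restriction $\psi|_\m:\m\to T_o(G/H)$ is a linear isomorphism. Normalizing the section so that $\s(o)=e$, the identity $\pi\circ\s=\mathrm{id}$ gives $\psi\circ d\s(o)=\mathrm{id}_{T_o(G/H)}$; since $\s_\ast(T_oG/H)=\m$ by hypothesis, this shows that $d\s(o):T_o(G/H)\to\m$ is precisely the inverse of $\psi|_\m$.

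Next comes the differentiation. To first order $\rho(t+\epsilon)=\rho(t)\exp(\epsilon N)+O(\epsilon^2)$, so by associativity of the action
\[
u(t+\epsilon)=\rho(t)\exp(\epsilon N)\cdot o=\Phi_{\rho(t)}\!\left(\exp(\epsilon N)\cdot o\right).
\]
Differentiating at $\epsilon=0$ and using that $\Phi_{\rho(t)}(o)=u$ gives $u_t=d\Phi_\rho(o)\,\psi(N)$. Because $N_h\in\h=\ker\psi$, we have $\psi(N)=\psi(N_m)$, whence $u_t=d\Phi_\rho(o)\,\psi(N_m)$. Comparing with (\ref{guev}), namely $u_t=d\Phi_\rho(o)\rr$, and using that $d\Phi_\rho(o)$ is invertible (as $\Phi_\rho$ is a diffeomorphism), we conclude $\psi(N_m)=\rr$ in $T_o(G/H)$. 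Applying $d\s(o)=(\psi|_\m)^{-1}$ and using $N_m\in\m$ then yields $N_m=d\s(o)\rr$, as claimed.

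I do not expect a serious obstacle here; the content is a short computation. The only points that require care are bookkeeping ones: using the left (rather than right) frame convention so that $N$ appears to the right of $\rho$ and the infinitesimal generator falls on the base point $o$, and correctly identifying $d\s(o)$ as the inverse of the infinitesimal action restricted to $\m$ rather than as an abstract lift. Once the diagram relating $\psi$, $\s$, and the splitting $\g=\h\oplus\m$ is in place, the result is immediate.
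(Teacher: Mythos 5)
Your argument is correct. Note, however, that the paper itself gives no proof of this statement: it is quoted from \cite{M1} as background, so there is nothing to compare against here. Your chain-rule computation --- differentiating $u=\Phi_\rho(o)$ in $t$, observing that $N_h$ lies in the kernel of the infinitesimal action $d\pi(e)$ at $o$, and identifying $d\s(o)$ as the inverse of that action restricted to $\m$ --- is the standard route and is sound; the only implicit hypothesis you invoke, $\s(o)=e$, is needed even to make sense of $\m=\s_\ast(T_oG/H)\subset\g$ and is satisfied by the sections actually used in the paper.
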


\subsection{Geometric Hamiltonian structures}\label{geomst} Assume $\g$ is semisimple.
One can define two natural Poisson brackets on $\Lo\g^\ast$ (see \cite{PS} for more information);
 namely, if $\HH, \F: \Lo\g^\ast \to \R$ are two functionals defined on $\Lo\g^\ast$ and if
 $L \in \Lo\g^\ast$, we define
 \begin{equation}\label{br1}
 \{\HH, \F \}_1(L) = \int_{S^1} \langle B\left(\var{\HH} \right)_x + ad^\ast(\var{\HH}) (L) , \var \F\rangle dx
 \end{equation}
where $B$ is an invariant bilinear form that can be used to identify the algebra with its dual (usually the trace of the product), $\langle, \rangle$ is the natural coupling between $\g^\ast$ and $\g$ (usually the trace of the product if we identify $\g$ and $\g^\ast$), and where
$\var{\HH}$ is the variational derivative of $\HH$ at $L$ identified, as usual, with an element of 
$\Lo\g$. 

One also has a compatible family of second brackets, namely
\begin{equation}\label{br2}
\{\HH, \F\}_2(L) = \int_{S^1} \langle (ad^\ast(\var{\HH}) (L_0) , \var \F\rangle dx
\end{equation}
where $L_0\in\g^\ast$ is any constant element.
From now on we will identify $\g^\ast$ with $\g$ using the trace and we will also assume that our curves on homogeneous manifolds have a {\it group monodromy}, i.e., there exists $m\in G$ such that 
\[
u(t+T) = m\cdot u(t)
\]
where $T$ is the period. Under these assumptions, the differential invariants will be periodic.

The following theorem is the foundation of the definition of Geometric Hamiltonian structures. It was proved in \cite{M1}.

\begin{theorem}\label{Kred} Let $\rho$ be a left or right moving frame along a curve $u$, determined by normalization equations. Let $\K$ be the manifold of Maurer-Cartan matrices $K$ for nearby curves, generated using the same normalization equations. Then, $\K \cong U/\Lo H$ where $U\subset \Lo\g^\ast$ is an open set, and where $\Lo H$ acts on $U$ via a gauge transformation. Furthermore, the Poisson bracket defined on $\Lo\g^\ast$ by (\ref{br1}) is reducible to the submanifold $\K$. We call this first reduced Poisson bracket a Geometric Poisson bracket on $G/H$.
\end{theorem}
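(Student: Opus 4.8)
The plan is to realize $\K$ as a global cross-section for a gauge action of the loop group $\Lo H$ and to recognize the bracket (\ref{br1}) as an affine Lie--Poisson structure for which this action is Poisson; the reduced bracket then falls out of Poisson reduction by a free action. First I would set up the gauge picture. Given $L\in\Lo\g^\ast\cong\Lo\g$ close to a Maurer--Cartan element, solving the linear system $\phi_x=\phi L$ with the periodicity (up to monodromy) forced by our standing assumption produces a frame $\phi\colon S^1\to G$ whose projection $u=\pi\circ\phi$ is a curve in $G/H$ regular enough to admit the chosen normalizations; this exhibits the Maurer--Cartan elements as filling an open set $U\subset\Lo\g^\ast$. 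Two frames along the same curve differ by a loop $h\colon S^1\to H$, and for a left frame the computation $\tilde\rho=\rho h$ gives the gauge law
\[
\tilde L = h^{-1}Lh + h^{-1}h_x
\]
(with the analogous formula for a right frame). Hence the $\Lo H$--orbits in $U$ are precisely the sets of Maurer--Cartan elements of all lifts of a fixed curve. Since the normalization equations single out a unique frame, and therefore a unique $K$, per curve (Theorem \ref{norm}), the manifold $\K$ meets each orbit in exactly one point, so $\K$ is a cross-section and $\K\cong U/\Lo H$; freeness of the action, needed for smoothness of the quotient, is inherited from the local freeness of the prolonged action that produced the moving frame.

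Next I would show the gauge action is Poisson for (\ref{br1}). The key observation is that (\ref{br1}) is the affine (Kac--Moody) Lie--Poisson bracket on $\Lo\g^\ast$: its Hamiltonian operator $\xi\mapsto B(\xi)_x+ad^\ast(\xi)(L)$ is exactly the operator whose value is the infinitesimal generator of the gauge law above. Concretely, for $\xi\in\Lo\h$ the Hamiltonian vector field of the linear functional $\HH_\xi(L)=\int_{S^1}\langle\xi,L\rangle\,dx$ equals $\xi_x+ad^\ast(\xi)(L)$, which is the infinitesimal $\Lo H$--action. Thus the $\Lo H$--gauge action is the restriction to $\Lo H$ of the affine coadjoint action of $\Lo G$, and therefore acts by Poisson automorphisms of (\ref{br1}). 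The periodicity built into the group monodromy hypothesis is what makes these linear functionals, and the integrations by parts hidden in the bracket, well defined with no boundary contributions.

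Finally, Poisson reduction by a free Poisson action is routine. Since $\Lo H$ preserves (\ref{br1}), the algebra of $\Lo H$--invariant functionals on $U$ is closed under the bracket; and because $\K$ is a cross-section, every functional on $\K$ has a \emph{unique} $\Lo H$--invariant extension to $U$ (two invariant extensions agreeing on $\K$ would differ by an invariant functional vanishing on a cross-section, hence vanishing identically). Setting $\{\bar\HH,\bar\F\}_\K:=\{\HH,\F\}_1|_\K$ for these invariant extensions then gives a well-defined bracket on $\K$ that inherits skew-symmetry, the Leibniz rule and the Jacobi identity directly from (\ref{br1}); this is the geometric Poisson bracket.

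I expect the main obstacle to be the verification that the gauge action is genuinely Poisson for (\ref{br1}) rather than merely for its undifferentiated coadjoint part: one must check that the cocycle term $B(\var{\HH})_x$ transforms correctly under $L\mapsto h^{-1}Lh+h^{-1}h_x$, so that the full operator $\xi_x+ad^\ast(\xi)(L)$ intertwines with the gauge action. This is precisely the point at which the \emph{first} bracket descends while the \emph{second} bracket (\ref{br2}), a frozen Lie--Poisson structure at $L_0$ that is not invariant under the gauge action, fails to reduce; so the computation must be carried far enough to locate where the argument breaks down for (\ref{br2}).
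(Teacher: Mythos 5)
The paper does not actually prove this theorem---it is quoted from \cite{M1}---and your reconstruction follows exactly the strategy of that source and of the paper's own subsequent use of it: identify the $\Lo H$-gauge orbits in an open set $U\subset\Lo\g^\ast$ with the sets of Maurer--Cartan elements of all lifts of a fixed curve, use the normalization equations to exhibit $\K$ as a (local) cross-section, observe that the gauge action is Hamiltonian for (\ref{br1}) because $\xi_x+ad^\ast(\xi)(L)$ is precisely the Hamiltonian vector field of the linear functional $L\mapsto\int_{S^1}\langle\xi,L\rangle\,dx$ for $\xi\in\Lo\h$, and then reduce; your closing worry about the cocycle term is already answered by that very computation. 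The only point you elide is the one the paper emphasizes right after the theorem statement: in the category of local functionals the globally invariant extension is in general nonlocal, so in practice one works with extensions constant on the gauge leaves only to first order along $\K$ (condition (\ref{Hcond})) and checks the Marsden--Ratiu condition, which your global-invariance argument subsumes at the formal level, so this is a matter of bookkeeping rather than a gap.
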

The reduction of these Poisson brackets can often  be found explicitly through algebraic manipulations. Indeed, if an extension $\HH$ of Hamiltonian functional $h:\K \to \R$ is constant on the gauge leaves of $\Lo H$, then its variational derivative will satisfy
\begin{equation}\label{Hcond}
\left(\frac{\delta \HH}{\delta L}(K)\right)_x + [K, \frac{\delta \HH}{\delta L}(K)] \in \h^0
\end{equation}
where $\h^0\subset \g^\ast$ is the annihilator of $\h$, and where $K$ is any Maurer-Cartan element. This relation is often sufficient to determine $\frac{\delta \HH}{\delta L}(K)$  completely and with it the reduced Poisson bracket; the reduced Poisson bracket will be defined through the application of (\ref{br1}) to two such extensions. 

The Poisson bracket (\ref{br2}) does not reduce in general to this quotient. When it does, it indicates the existence of an associated completely integrable system. The geometric Poisson bracket above is directly related to invariant evolutions through our next theorem. Assume
\begin{equation}\label{uinvev}
u_t = W(u, u', u'', \dots)
\end{equation}
is an evolution of curves invariant under the action of the group. Assume (\ref{uinvev}) induces an evolution of the form
\begin{equation}\label{kevg}
\bok_t = Q(\bok,\bok',\bok'',\dots)
\end{equation}
on a generating system ${\bf k} = (k_i)$ of differential invariants of the flow $u(t,x)$. We say that {\it (\ref{uinvev}) is a $G/H$-geometric realization of the flow (\ref{kevg})}.

Assume now, as before, that $\g = \h\oplus\mathfrak{m}$ and that $\s:G/H \to G$ is a section that identifies $T_oG/H$ with $\m$. Our following theorem finds geometric realizations for any Geometric Hamiltonian flow, Hamiltonian with respect to the reduced Poisson bracket. 

\begin{theorem}\label{greal} (\cite{M1}) Assume $\K$ is described by an affine subspace of $\Lo\g^\ast$. Let $h: \K \to \R$ be a Hamiltonian functional and $\HH:\Lo\g^\ast\to \R$ an extension of $h$, constant on the leaves of $\Lo H$ under the  gauge action.
Let $\frac{\delta \HH}{\delta L}(\bok) = \frac{\delta \HH}{\delta L}(\bok)_\m + \frac{\delta \HH}{\delta L}(\bok)_\h$ be the components of the variational derivative according to the splitting of the algebra.  Then  
\[
u_t = d\Phi_\rho(o)d\s(o)^{-1}\frac{\delta \HH}{\delta L}(\bok)_\m
\]
is a geometric realization of the reduced Hamiltonian system with Hamiltonian functional $h$. Notice that this evolution is of the form (\ref{guev}) with 
\begin{equation}\label{compatibility} \frac{\delta \HH}{\delta L}(\bok)_\m = d\s(o)\rr.
\end{equation}

\end{theorem}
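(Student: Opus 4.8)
The plan is to construct the evolution explicitly, verify that it is an invariant curve flow, and then compute the evolution it induces on the Maurer--Cartan invariants and match that with the reduced Hamiltonian flow of $h$. First I would set $\rr = d\s(o)^{-1}\var{\HH}(\bok)_\m$, which is exactly the content of (\ref{compatibility}). Since $\HH$ is constant on the gauge leaves, its variational derivative $\var{\HH}$, evaluated at a Maurer--Cartan element $K$, is a function of the entries of $K$ and their $x$-derivatives; hence the components of $\rr$ are genuine differential invariants. Theorem \ref{invev} then shows immediately that $u_t = d\Phi_\rho(o)\rr$ is an invariant evolution of curves in $G/H$ of the form (\ref{guev}), which already verifies the parenthetical assertion of the statement.

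Next I would pass to the induced evolution. Let $\rho$ be the left moving frame along $u(t,\cdot)$ and set $N = \rho^{-1}\rho_t$. By Theorem \ref{Nsplit} its $\m$-component is $N_\m = d\s(o)\rr = \var{\HH}(\bok)_\m$. Differentiating the Serret--Frenet equation $\rho_x = \rho K$ in $t$, differentiating $\rho_t = \rho N$ in $x$, and using $\rho_{xt}=\rho_{tx}$, I obtain the zero-curvature relation
\[
K_t = N_x + [K,N],
\]
which is precisely the evolution induced by (\ref{guev}) on $K$, and therefore on the generating invariants $\bok$. Writing $V = \var{\HH}(\bok)$ for the full variational derivative, the identity $d\s(o)\rr = V_\m$ shows that $N$ and $V$ share the same $\m$-component, so that $W := N - V$ lies in $\Lo\h$.

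The final, and most delicate, step is to recognize $K_t = N_x + [K,N]$ as the reduced Hamiltonian flow of $h$. Under the trace identification $\g^\ast\cong\g$ the Hamiltonian vector field of the extension $\HH$ for the bracket (\ref{br1}) is $V_x + [K,V]$; since $\HH$ is constant on the $\Lo H$-leaves, $V$ obeys the annihilator condition (\ref{Hcond}), and by Theorem \ref{Kred} this is exactly what makes $V_x + [K,V]$ descend to $\K\cong U/\Lo H$ as the reduced Hamiltonian flow. Subtracting,
\[
\bigl(N_x + [K,N]\bigr) - \bigl(V_x + [K,V]\bigr) = W_x + [K,W], \qquad W\in\Lo\h,
\]
and the right-hand side is precisely the infinitesimal generator of the $\Lo H$ gauge action at $K$, hence a vertical direction for the projection $U\to U/\Lo H$. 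Therefore the induced flow and the Hamiltonian flow project to the same flow on $\K$, and (\ref{guev}) with the stated $\rr$ is a geometric realization of the reduced Hamiltonian system with Hamiltonian $h$.

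I expect the crux to be this last step. One must identify the tangent space to $\K$ and its vertical gauge directions precisely --- using the hypothesis that $\K$ is an affine subspace of $\Lo\g^\ast$, so that its tangent space is a fixed linear complement to the gauge orbits --- and then argue rigorously that differences of the form $W_x + [K,W]$ with $W\in\Lo\h$ become invisible after reduction. Keeping track of the interplay between the normalization that pins down $N_\h$ and the annihilator condition (\ref{Hcond}) that determines $V_\h$, and confirming that both the induced flow and the reduced Hamiltonian flow are genuinely tangent to $\K$, is where the real care will be needed.
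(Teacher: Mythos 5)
The paper does not prove Theorem \ref{greal}; it is quoted from \cite{M1} as background, so there is no in-paper argument to compare against. Your reconstruction is correct and is essentially the standard reduction argument from that reference: identify $\rr$ via the compatibility condition so that Theorem \ref{invev} gives an invariant flow of the form (\ref{guev}), use Theorem \ref{Nsplit} and the zero-curvature relation $K_t = N_x + [K,N]$ to compute the induced evolution on $\K$, and observe that this differs from the Hamiltonian vector field $V_x+[K,V]$ of the extension by a gauge direction $W_x+[K,W]$ with $W = N - V \in \Lo\h$, hence projects to the reduced Hamiltonian flow on $\K \cong U/\Lo H$. You have also correctly located the only point requiring care, namely that $T_K\K$ is transverse to the gauge directions so that the tangency of $K_t$ to $\K$ pins down the projection uniquely.
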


Equation (\ref{compatibility}) is often referred to as the {\it compatibility condition}. 

\subsection{Definition of the light cone and the $2$-conformal sphere as homogeneous spaces}

\subsubsection*{The light cone in Lorentzian geometry} Let $(u_0,u_1,u_2,u_3)\in\mathbb{R}^4$; we define the inner product on $\R^4$ as the one associated to the Minkowski (degenerate) metric, $\pr{u}{u} = u\cdot u = \|u\|_J^2 = u^TJu,$ where \[J = \begin{pmatrix} 0&0&0&-1\\0&1&0&0\\0&0&1&0\\-1&0&0&0\\\end{pmatrix}.\]   This inner product defines the Lie group $O(3,1) = \{\Theta\in \GL(4) | \Theta^TJ\Theta = J\}$ as the elements of $\GL(4)$ that preserve it. With this representation, its Lie algebra is easily found to be defined by elements of $\gl(4)$ of the form 
\begin{equation}\label{alg}\begin{pmatrix}a&z^T&0\\w&B&z\\0&w^T&-a\end{pmatrix}\end{equation} 
where $z = (z_1,z_2)^T,$ $w = (w_1,w_2)^T,$ and where $B$ is a skew-symmetric matrix.

The light cone is defined as $C_L = \{u\in\R| \pr{u}{u} = 0\}$.
 In particular, if $u\in C_L$, $\hat{u} = \begin{pmatrix}u_1\\u_2\end{pmatrix},$ and we call $\|\hat{u}\| = \sqrt{u_1^2+u_2^2},$ then $u_1^2+u_2^2 = \|\hat{u}\|^2 = 2u_0u_3$.  The Lorentzian group  naturally acts on the light cone as
\begin{equation}\label{linear}
\begin{array}{ccc}\Theta(3,1)\times C_L&\to& C_L\\
(\Theta,u)&\to& \Theta u
\end{array}
\end{equation}
and this action is transitive. Define $H$ to be the isotropy subgroup of $e_4$. Then, the cone is isomorphic to the homogeneous space $O(3,1)/H$ and the isomorphism is given by $u \to [g_u]$ where $g_u e_4 = u$. Indeed, the natural action of $O(3,1)$ on $O(3,1)/H$ defined as $g\cdot[\hat g] = [g\hat g]$ is given by (\ref{linear}) under this isomorphism: according to $g g_u\cdot e_4 = g\cdot u$ the isomorphism takes $g u = g\cdot u$ to $[gg_u]= g\cdot [g_u]$.\vskip 2ex

\subsubsection*{The conformal sphere}
 The $2$-M\"{o}bius sphere, $M$, a flat model for the conformal plane, is obtained as the projectivization of the cone, and the Lorentzian action also projectivizes into conformal transformations (\cite{GS}).  The projectivization of the cone is topologically equivalent to a 2-sphere and we will locally represent it in coordinates $m=\begin{pmatrix}m_1\\m_2\end{pmatrix}$ as follows.   Define the lift from $M$ to $C_L$ as 
\begin{equation}
\label{projectmap}
\Lambda:M\to\C_L\hskip 5ex
\Lambda: m \to \begin{pmatrix}u_0\\m_1\\m_2\\1\end{pmatrix} = \hat{m},
\end{equation}
 where $u_0 = \frac{1}{2}\|m\|^2 = \frac{1}{2}(m_1^2+m_2^2)$.
This is the standard lift from $M$ to $C_L$ associated to the local projectivization $\Pi:C_L \to M$
$$\Pi:\begin{pmatrix}u_0\\u_1\\u_2\\u_3\end{pmatrix}\to\begin{pmatrix}\frac{u_1}{u_3}\\\frac{u_2}{u_3}\end{pmatrix}.$$

The Lorentzian group acts on $M$ by conformal transformations using $\Lambda$, namely  
$$\begin{array}{ccc}\Theta(3,1)\times M&\to& M\\
(\Theta,m)&\to&\Pi\Theta\Lambda(m) = \Theta\cdot m\end{array}.
$$
Locally, elements of $O(3,1)$ can be factored as
\begin{equation}
\label{decomposed}
g = g_1g_0g_{-1} = \begin{pmatrix}1&0&0\\\xi&I&0\\\frac{1}{2}\|\xi\|^2&\xi^T&1\end{pmatrix}
\begin{pmatrix}\alpha&0&0\\0&A&0\\0&0&\alpha ^{-1}\end{pmatrix}
\begin{pmatrix}1&v^T&\frac{1}{2}\|v\|^2\\0&I&v\\0&0&1\end{pmatrix}
\end{equation}
where $\xi = (\xi_1,\xi_2)^T, v = (v_1,v_2)^T$ and $A\in O(2)$.

With this notation, the general formula for the conformal action of the Lorentzian group is: 
\begin{equation}
\label{sphereact}
g\cdot m = \frac{
\alpha\xi(\frac{\|m\|^2}{2}+v^Tm+\frac{\|v\|^2}{2})+A(m+v)}
{\frac{\alpha}{2}\|\xi\|^2(\frac{\|m\|^2}{2}+v^Tm+\frac{\|v\|^2}{2})+\xi ^TA(m+v)+\alpha ^{-1}}.
\end{equation}


\section{The local geometry of non-radial curves in the light cone}
In this section we will find a group-based moving frame along curves in the cone, invariant under the linear action of $O(3,1)$, and we will use it to describe invariants, Serret-Frenet equations, geometric Poisson brackets and their geometric realizations. 
\subsection{Group based moving frame}
We will use the normalization process described in our previous section to construct the right group-based moving frame for the cone.  Consider the element $g\in G$ locally defined as in (\ref{decomposed})

\vskip 2ex
{\em Zeroth order normalization}. The first two matrices, $g_1$ and $g_0,$ in (\ref{decomposed}) are members of the isotropy subgroup of $e_4 = \begin{pmatrix}0\\{\bf 0}\\1\end{pmatrix}.$ Therefore, by taking $u$ to $e_4$ we can determine the remaining factor $g_{-1} = g_u^{-1}$. 
\begin{equation}
\label{c0}
g_{-1}\cdot u = g_{-1}\cdot\begin{pmatrix}u_0\\\hat{u}\\u_3\end{pmatrix} = c_0 = e_4 \end{equation}
  
   From here $v = -\frac{\hat{u}}{u_3}$ and $\alpha = u_3$.  Since $C_L$ consists of an upper cone and a lower one, we  will restrict to the upper cone $\|u'\|_J^2 = (u')^TJu'>0.$  However, you can cover the other case and get the same results, just making sure that you keep track of the negative signs. Notice that the non-degenerate condition $\|u'\|_J\ne 0$ implies that $u'$ is never in the radial direction of the cone.
   \begin{definition} We will say that a curve $u$ is non-radial or {\it star-shaped} if $u'$ is never in the radial direction of the cone; for example, if $\|u'\|_J \ne 0$.
   \end{definition}
   \vskip 2ex
{\em First order normalization}.
To find $A$ and $\xi_1,$ we need to normalize the first prolongation of the action. Since the action is linear, its recurrent prolongations are given by the matrix multiplication of $g$ by the corresponding derivative. The first normalization equation is thus given by
\begin{equation}
\label{c1}
gu' = c_1 = \begin{pmatrix}0\\\|u'\|_J\\0\\0\end{pmatrix}.
  \end{equation}
 We can normalize only three equations since the length of $u'$ is necessarily preserved by $g$.  After substituting the previously found values for $v$ and $\alpha$, 
and assuming $u_3\ne 0$, we find
   \[
   A^T\begin{pmatrix}1\\0\end{pmatrix} = \frac{\hat{u}' - \displaystyle\frac{\hat{u}}{u_3}u_3'}{\|u'\|_J}.
   \]
That is,
\begin{equation} \label{A}A = \frac{1}{u_3\|u'\|_J}\begin{pmatrix}\det\begin{pmatrix}u_3&u_1\\u_3'&u_1'\end{pmatrix}&
\det\begin{pmatrix}u_3&u_2\\u_3'&u_2'\end{pmatrix}\\ & \\
\det\begin{pmatrix}u_2&u_3\\u_2'&u_3'\end{pmatrix}&
\det\begin{pmatrix}u_3&u_1\\u_3'&u_1'\end{pmatrix}\end{pmatrix} = {(a_{ij})}.\end{equation}
We also find 
\begin{equation}\label{xi1}
\xi_1 = -\frac{u_3'}{u_3\|u'\|_J}.
\end{equation}

\vskip 2ex
{\it Second order normalization.}
Finally, we normalize the second prolonged action
 \begin{equation}
\label{c2}
gu'' = c_2 = \begin{pmatrix}a\\b\\c\\d\end{pmatrix}.
 \end{equation}
Only one of these entries can be normalized since three of the entries are already determined.  For example, $\pr{u'}{u''} = c_1^TJc_2 = \|u'\|_Jb,$ so \[
b = \frac{\pr{u'}{u''}}{\|u'\|_J} = (\|u'\|_J)'.\]  Also $\|u''\|^2_J = c_2^TJc_2 = -2ad +b^2+c^2$.  Using the fact that $\pr{u}{u}$ and its derivatives vanish, and substituting previously found values, one can directly check that  $a = \|u'\|_J^2$. Normalizing $c=0,$ we find \begin{equation}\label{xi2}
\xi_2 = \frac1{u_3\|u'\|_J^2 }\det\begin{pmatrix} u_1&u_2&u_3\\ u'_1&u'_2&u'_3\\
u_1''&u_2''&u_3''\end{pmatrix}
\end{equation}
and hence
 \[
 d = \frac{-\|u''\|_J^2\|u'\|_J^2 + \pr{u'}{u''}^2}{2\|u'\|_J^3}.
 \]  

We can now combine all of the components into the right moving frame:
\begin{equation}
\label{coneframe}
\rho_L = \begin{pmatrix}u_3&\hat{u}^T&u_0\|\hat{u}\|^2\\u_3\xi&u_3\xi\cdot v^T+A&u_0\xi\|\hat{u}\|^2+Av\\
\frac{u_3}{2}\|\xi\|^2
&\frac{u_3}{2}\|\xi\|^2v^T+\xi^TA&\frac{u_0}{2}\|\xi\|^2\|\hat{u}\|^2+\xi^TAv+u_3 ^{-1}\end{pmatrix}
\end{equation}
where $A$ and $\xi$ are as above. The left moving frame will be its inverse.
\subsection{Maurer Cartan matrix and differential invariants}
We are now in position to find the right Maurer-Cartan matrix $K_L = (\rho_L)_x\rho_L^{-1}$, and with it a generating system of invariants.

  \begin{theorem}
\begin{equation}\label{KL}K_L = \begin{pmatrix}0&-k_0&0&0\\ -k_1 &0&0&-k_0\\
-k_{2}&0&0&0\\0&-k_1&-k_2&0\end{pmatrix}
\end{equation} 
where 
\[
k_0 = \|u'\|_J, \hskip 2ex k_1 = \frac{-\|u''\|_J^2\|u'\|_J^2 + \pr{u'}{u''}^2}{2\|u'\|_J^4}
\]
and
\[
k_2 = \frac1{u_3\|u'\|_J^2}\det\begin{pmatrix} u&u'&u'''\end{pmatrix} + 3\frac{\pr{u'}{u''}}{u_3\|u'\|_J^3}\det\begin{pmatrix} u&u'&u''\end{pmatrix}.
\]
The invariants $k_0$, $k_1$ and $k_2$ and their derivatives functionally generate any other invariant for curves in the light cone under the centro-affine action of the Lorentzian group.
\end{theorem}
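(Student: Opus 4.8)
The plan is to avoid computing the inverse $\rho_L^{-1}$ altogether and instead combine two facts: that $K_L$ is forced to lie in the Lie algebra, and that the recurrence formulas of Theorem~\ref{Knorm} pin down its entries directly from the normalization data. First I would observe that $\rho_L$ takes values in $O(3,1)$, being the product $g_1g_0g_{-1}$ of Lorentz elements, so its right logarithmic derivative $K_L = (\rho_L)_x\rho_L^{-1}$ automatically lies in the Lie algebra $\oa(3,1)$ of $O(3,1)$ and hence has the shape (\ref{alg}). Writing $z=(z_1,z_2)^T$, $w=(w_1,w_2)^T$ and $B=\begin{pmatrix}0&-\beta\\\beta&0\end{pmatrix}$, this leaves only the six unknowns $a,z_1,z_2,w_1,w_2,\beta$ to be determined.

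Next I would specialize the recurrence (\ref{Knor}) to our linear action. Since $O(3,1)$ acts on $C_L$ by matrix multiplication, the prolonged infinitesimal action is simply $K_L\cdot I_r = K_L I_r$ (ordinary matrix--vector product), so (\ref{Knor}) becomes $K_L I_r = -I_{r+1} + (I_r)'$, where $I_r = \rho_L\cdot u^{(r)}$ and where $I_0=e_4$, $I_1=c_1$, $I_2=c_2$ are the normalization constants of (\ref{c1})--(\ref{c2}). Evaluating at $r=0$ gives the last column of $K_L$ as $K_L e_4 = -I_1 = (0,-\|u'\|_J,0,0)^T$, forcing $a=0$, $z_1=-k_0$, $z_2=0$ with $k_0=\|u'\|_J$. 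Evaluating at $r=1$, the left side is $k_0$ times the second column of $K_L$; matching it against $-c_2+(c_1)'$ recovers $a=\|u'\|_J^2$ and $b=(\|u'\|_J)'$ as a consistency check, forces $\beta=0$ (this is precisely the effect of the normalization $c=0$, which kills the skew part $B$), and yields $w_1=-k_1$ with $k_1=d/\|u'\|_J$. Finally, evaluating at $r=2$ determines the last free entry: its third row reads $w_2\|u'\|_J^2 = -(\rho_L u''')_3$, so $w_2=-k_2$ with $k_2=(\rho_L u''')_3/\|u'\|_J^2$. This establishes the form (\ref{KL}).

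It then remains to turn the three coefficients into the stated closed formulas. For $k_0$ and $k_1$ nothing further is needed: $k_0=\|u'\|_J$ is the first normalization constant, and substituting the value $d=\frac{-\|u''\|_J^2\|u'\|_J^2+\pr{u'}{u''}^2}{2\|u'\|_J^3}$ found during the second order normalization into $k_1=d/k_0$ gives the displayed expression. The one genuinely computational step --- and the step I expect to be the main obstacle --- is the explicit evaluation of $(\rho_L u''')_3$ for $k_2$. Using the factorization $\rho_L=g_1g_0g_{-1}$ together with $\alpha=u_3$, $v=-\hat u/u_3$, $A$ as in (\ref{A}) and $\xi_2$ as in (\ref{xi2}), the third component of $\rho_L u'''$ is a sum of a term proportional to $\xi_2 u_3$ and a term coming from the second row of $A$ acting on the prolongation; the factor $3$ in $k_2$ arises from the product-rule derivatives accumulated in differentiating these quantities. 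After clearing the denominators $u_3$ and $\|u'\|_J$ and recognizing the resulting minors as $\det\begin{pmatrix}u&u'&u'''\end{pmatrix}$ and $\det\begin{pmatrix}u&u'&u''\end{pmatrix}$, the expression collapses to the stated $k_2$. I would organize this by reducing everything to those two determinants from the outset, so that the bookkeeping stays manageable.

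For the final assertion I would invoke the generation theorem of \cite{Hu} quoted above: the coefficients of the (right) Serret--Frenet equations always contain a basis for the space of differential invariants. Since the only functionally distinct entries of $K_L$ are $k_0$, $k_1$ and $k_2$, every differential invariant of a star-shaped curve in the cone is a function of these and their $x$-derivatives. If desired, functional independence of the three generators can be read off from their explicit formulas, which genuinely involve the jets up to $u'''$, so that no relation among them forces a reduction of the generating set.
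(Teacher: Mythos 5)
Your proposal is correct and follows essentially the same route as the paper: it applies the recurrence of Theorem~\ref{Knorm} with the linear (matrix-multiplication) prolonged action at $r=0,1,2$, uses membership of $K_L$ in $\oa(3,1)$ to cut the unknowns down to the three generators, computes $c_3=\rho_L u'''$ directly for $k_2$, and invokes the generation theorem of \cite{Hu} for the final assertion. The only differences are cosmetic (parametrizing the algebra element up front and the closing remark on functional independence).
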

\begin{proof}
Since $K_L$ is the right Maurer-Cartan matrix, we can make use of theorem \ref{Knorm} and try to find $K_L$ solving the equations 
\begin{equation}
\label{MB}
K_L\cdot I_r = -I_{r+1}+(I_{r})_x
 \end{equation}
where $I_k = c_k$ when normalized, otherwise $I_k = \rho u_k$. The dot represents the infinitesimal prolonged action on derivatives, which, since the action is linear, is given by $K\cdot c_r = Kc_r$.  

When $r=0$ we have
\[
K_L e_4 = -c_1
\]
which determines the last column of $K$ as shown in the statement of the theorem.

When $r = 1$
\[
\|u'\|_J K_L e_2 = K_L c_1 = - c_2 + (c_1)_x = -\begin{pmatrix}a\\ b\\ c\\ d\end{pmatrix}  + \begin{pmatrix} 0\\ (\|u'\|_J)_x\\0\\ d\end{pmatrix} = -\begin{pmatrix} a\\ 0\\0\\ d\end{pmatrix}
\]
which also determined the second column of $K$ to be as shown in the statement. 

Since $K_L\in \g,$ we now have all entries except for the ones in place $(3,1)$, equal to the one in place $(4,3).$ We have called this entry $-k_2$ in the statement. To proceed, we need to use the value $r = 2$ in the recurrence relation for $K$, and, in particular, we will need to find the third entry of $c_3,$ (call it ${c}^3_3$). This entry is of particular interest since $$K\cdot c_2 = -c_3 + (c_2)_x$$ implies $$k_{2}\|u'\|_J = -{c}^3_3,$$ an equation that will allow us to solve for $k_2$ once we know ${c}^3_3$.

Since we know $\rho_L$, calculating $\rho_L u''' = c_3$ directly we conclude 
$$
{c}^3_3 = 3\pr{u''}{u'} \xi_2 + e_2^T A \left(\hat u'''-u_3'''\frac{\hat u}{u_3}\right) 
$$
$$= \frac1{u_3\|u'\|_J}\det\begin{pmatrix} u&u'&u'''\end{pmatrix} + 3\frac{\pr{u'}{u''}}{u_3\|u'\|_J^2}\det\begin{pmatrix} u&u'&u''\end{pmatrix}.$$ This gives the formula for $K_L$ as stated in the Theorem.
\end{proof}

\subsection{Invariant flows of non-radial curves}
Once we have a moving frame we can write down a formula for the most general flow of curves invariant under the action of the group, according to Theorem \ref{invev}. Let $\Phi_g: C_L \to C_L$ be the map given by the action of $G$ on our manifold, that is $\Phi_g(x) = g\cdot x$, where this action is given as in  (\ref{linear}). If $g$ is given by (notice that this is the inverse of (\ref{decomposed}), reflecting the fact that theorem \ref{invev} requires a left moving frame - the inverse of a right one)
\begin{equation}\label{gleft}
g = \begin{pmatrix}1&-v^T&\frac{1}{2}\|v\|^2\\0&I&-v\\0&0&1\end{pmatrix}
\begin{pmatrix}\alpha^{-1}&0&0\\0&A^{-1}&0\\0&0&\alpha\end{pmatrix}
\begin{pmatrix}1&0&0\\-\xi&I&0\\\frac{1}{2}\|\xi\|^2&-\xi^T&1\end{pmatrix} = g_{-1} g_0 g_1
\end{equation}
then $d\Phi_g(o)$ is given by $d\Phi_{g_{-1}}(o)d\Phi_{g_0}(o)d\Phi_{g_1}(o)$ (recall that $o = e_4$). One can easily calculate each of these factors as
\[
d\Phi_{g_{-1}}(o) \begin{pmatrix} \hat w\\ w_3\end{pmatrix} = \begin{pmatrix} I&-v\\ 0&1\end{pmatrix}\begin{pmatrix} \hat w\\ w_3\end{pmatrix},\hskip 2ex d\Phi_{g_0}(o)\begin{pmatrix} \hat w\\ w_3\end{pmatrix} = \begin{pmatrix} A^{-1}&0\\ 0 & \alpha\end{pmatrix} \begin{pmatrix} \hat w\\ w_3\end{pmatrix}
\]
\[
d\Phi_{g_1}(o)\begin{pmatrix} \hat w\\ w_3\end{pmatrix} = \begin{pmatrix} I&0\\ -\xi^T&1\end{pmatrix} \begin{pmatrix}\hat w\\ w_3\end{pmatrix}.
\]
Therefore, if $\rho_L^{-1}$ is the left moving frame associated to the right moving frame previously found,
\[
d\Phi_{\rho_L^{-1}}(o) = \begin{pmatrix} A^{-1}-\hat u\xi^T & \hat u\\ -u_3\xi^T & u_3\end{pmatrix}
\]
Using Theorem \ref{invev}, the following result is immediate.
\begin{theorem} Assume $\bu = \begin{pmatrix} \hu\\ u_3\end{pmatrix} = \begin{pmatrix} u_1\\ u_2\\ u_3\end{pmatrix} $
\[
\bu_t = F(\bu, \bu', \bu'', \dots) 
\]
is an evolution of curves in the cone, invariant under the Lorentzian action; that is, $O(3,1)$ takes solutions to solutions. Then, there exist $r_1, r_2, r_3$ functions of the invariants $k_0, k_1, k_2$ and their derivatives such that
\begin{equation}\label{uevfor}
\bu_t = \begin{pmatrix} A^{-1}-\hu\xi^T & \hu\\ -u_3\xi^T & u_3\end{pmatrix}\begin{pmatrix} r_1\\ r_2\\ r_3\end{pmatrix}
\end{equation}
where $A$ and $\xi$ are as in (\ref{A}),  (\ref{xi1}) and (\ref{xi2}). 
\end{theorem}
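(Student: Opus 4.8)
The plan is to read this statement off directly from Theorem \ref{invev}, which already asserts that any $G$-invariant evolution of curves in $G/H$ has the form $u_t = d\Phi_\rho(o)\rr$, where $\rho$ is a \emph{left} group-based moving frame and $\rr$ is a vector of differential invariants. Here $G = O(3,1)$, $o = e_4$, and the light cone $C_L$ is three dimensional, so that $\rr = (r_1,r_2,r_3)^T$. The only left moving frame available is the inverse $\rho_L^{-1}$ of the right moving frame $\rho_L$ constructed in (\ref{coneframe}). Consequently the entire substance of the theorem reduces to computing the $3\times 3$ matrix $d\Phi_{\rho_L^{-1}}(o)$ and recognizing that the resulting coefficients are differential invariants.

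I would compute $d\Phi_{\rho_L^{-1}}(o)$ by exploiting the factorization (\ref{gleft}), $\rho_L^{-1} = g_{-1}g_0g_1$. By the chain rule $d\Phi_{g_{-1}g_0g_1}(o) = d\Phi_{g_{-1}}(g_0g_1\cdot o)\,d\Phi_{g_0}(g_1\cdot o)\,d\Phi_{g_1}(o)$, and since $g_0$ and $g_1$ lie in the isotropy subgroup $H$ of $e_4 = o$, we have $g_1\cdot o = o$ and $g_0g_1\cdot o = o$; thus all three differentials are evaluated at the single point $o$ and the product collapses to $d\Phi_{g_{-1}}(o)\,d\Phi_{g_0}(o)\,d\Phi_{g_1}(o)$. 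Because the action (\ref{linear}) is linear, $\Phi_g$ is the restriction of matrix multiplication by $g$, so each factor is simply left multiplication by the corresponding block of (\ref{gleft}), restricted to $T_oC_L = \{w : w_0 = 0\}$ and written in the coordinates $(\hat w, w_3)$. This yields the three $3\times 3$ matrices displayed in the text, whose product is $d\Phi_{\rho_L^{-1}}(o) = \begin{pmatrix} A^{-1}-\hu\xi^T & \hu\\ -u_3\xi^T & u_3\end{pmatrix}$, with $A$ and $\xi$ substituted from (\ref{A}), (\ref{xi1}) and (\ref{xi2}). Feeding this into Theorem \ref{invev} gives precisely (\ref{uevfor}). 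Finally, since the entries $r_i$ of $\rr$ are differential invariants, the generating property of $k_0,k_1,k_2$ established together with $K_L$ guarantees that each $r_i$ is a function of $k_0,k_1,k_2$ and their $x$-derivatives.

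The computations involved are entirely routine, and the theorem is the immediate corollary of Theorem \ref{invev} advertised in the text; there is no genuine obstacle. The one point requiring care is the chain-rule splitting above: one must verify that the inner factors $g_0$ and $g_1$ fix the base point $o$, so that every differential is evaluated at the same point, and that the three-dimensional tangent space is represented consistently by $(\hat w, w_3)$ across the three factors. Once this bookkeeping is settled, the matrix multiplication and the invocation of Theorem \ref{invev} close the argument.
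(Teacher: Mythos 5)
Your proposal is correct and follows essentially the same route as the paper: the text preceding the theorem invokes Theorem \ref{invev}, factors the left frame as $g_{-1}g_0g_1$, computes the three differentials at $o=e_4$ (using that $g_0,g_1$ fix $o$ and the action is linear), and multiplies them to get the stated matrix, after which the theorem is declared immediate. Your extra care about the chain-rule base points and the identification $T_oC_L=\{w: w_0=0\}$ is exactly the bookkeeping the paper leaves implicit.
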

Using this theorem we could, in principle, find directly the invariant evolutions that preserve the arc-length $k_0 = \|\bu'\|_J$. But direct calculations are quite involved, so we will choose a simpler path using theorem \ref{Nsplit}. Assume $u$ evolves as in (\ref{uevfor}) and assume that $N = -(\rho_L)_t\rho_L^{-1}$ is the evolution of the left moving frame $\rho_L^{-1}$. Now, consider the section
\[
\s (u) = \begin{pmatrix} u_3^{-1} &\frac{u^T}{u_3}&\frac1{2 u_3} u\cdot u\\ 0&I&u\\ 0&0&u_3\end{pmatrix}
\]
so that $d\s(o) v$ is given by
\[
\begin{pmatrix} -v_3 & v^T & 0\\ 0&I&v\\ 0&0&v_3\end{pmatrix}
\]
Using Theorem \ref{Nsplit} we can conclude that $N_m$, the component of $N$ in the direction of the section given by the $g_{-1}$ factor, is given by $d\s(o)\begin{pmatrix}r\\ r_3\end{pmatrix}$. In this case we have 
\[
N_m = \begin{pmatrix} r_3 & r^T& 0 \\ 0&0&r\\ 0&0& r_3\end{pmatrix}.
\]
On the other hand, if $K$ is the left Maurer-Cartan matrix, the commutation  between $\frac d{dx}$ and $\frac d{dt}$ (or the structure equations) state that
\[
K_t = N_x + [K,N].
\]
Assume that 
\[
N = \begin{pmatrix} -r_3 & r^T &0\\ n & N_0& r\\ 0&n^T & r_3\end{pmatrix}.
\]
Using the fact that  $K = -K_L$ where $K_L$ is as in (\ref{KL}), we can find 
\[
(k_0)_t = r_1'+ r_3 k_0.
\]
Therefore, the condition on our invariant evolution to preserve arc-length is given by
\begin{equation}\label{alpre}
r_3 = -\frac 1{k_0} r_1'.
\end{equation}
The structure equations also give you the evolution of the invariants $k_1, k_2$. Indeed, in $n = (n_1, n_2)^T$ and $N_0 = \begin{pmatrix} o&n_0\\ -n_0&0\end{pmatrix}$, the structure equations become
\[
\begin{pmatrix} 0& k_0& 0 & 0 \\ k_1&0&0& k_0\\ k_2 & 0&0&0\\ 0&k_1&k_2&0\end{pmatrix}_t 
\]
\[
=  \begin{pmatrix} -r_3 & r^T &0\\ n & N_0& r\\ 0&n^T & r_3\end{pmatrix}_x + \begin{pmatrix} k_0n_1-k_1r_1-k_2r_2&k_0r_3&k_0 n_0&0\\ -k_1r_3-k_2n_0&0&k_1r_2-k_2r_1+k_0n_2&\ast\\ -k_2r_3+k_1n_0&\ast&0&\ast\\ 0&\ast&\ast&\ast\end{pmatrix}
\]
where $\ast$ indicate entries that are determined by the group from the entries shown. These equations imply
\[
n_0 = -\frac 1{k_0} r_2', \hskip 2ex n_1 = \frac 1{k_0}(k_1 r_1+ k_2 r_2 + r_3'), \hskip 2ex n_2 = \frac 1{k_0} \left(\left(\frac 1{k_0}r_2'\right)'+k_2r_1-k_1r_2\right)
\]
and they give us also the evolutions of $k_1$ and $k_2$. These are given by
\begin{equation}\label{kev}
\begin{array}{ccc} (k_1)_t &=& \left(\frac1{k_0}(k_1r_1+k_2r_2+ r_3')\right)'-k_1r_3+\frac{k_2}{k_0} r_2'\hskip 6ex\\ (k_2)_t &=& \left(\frac1{k_0}\left(\frac1{k_0}r_2'\right)'+k_2r_1-k_1r_2\right)'-k_2r_3-\frac{k_1}{k_0}r_2'\end{array}
\end{equation}

Finally, notice that, if we assume $k_0 = 1$ and the evolution preserves this arc-length, then $r_3 = - r_1'$ and we get the equations
\begin{equation}\label{P}
\begin{pmatrix} k_1\\ k_2\end{pmatrix}_t = \begin{pmatrix} -D^3+k_1D+Dk_1&Dk_2+k_2D\\ Dk_2+k_2D& D^3-Dk_1-k_1D\end{pmatrix} \begin{pmatrix} r_1\\ r_2\end{pmatrix} = P\begin{pmatrix} r_1\\ r_2\end{pmatrix}.
\end{equation}

The operator $P$ is known to be one of the Hamiltonian structures for a system of complexly coupled KdV equations. 
\section{The local geometry of curves on the $2$-M\"obius sphere}

In our next section, we will do a study of flows in the conformal sphere, parallel to the one we just did for the cone. The results were previously found in (\cite{M2}) for the general case $O(n+1,1)$, but we present them again readjusting the calculations to match those of the cone.

\subsection{Group based moving frame} In (\ref{sphereact}) we described the action of $O(3,1)$ on $M$. As before we will use it to find a right moving frame through recurrent prolongations. Assume $g$ is as in (\ref{decomposed}) with slightly different notation ($B$ instead of $A$, $w$ instead of $v$, $\eta$ instead of $\xi$ and $\beta$ instead of $\alpha$). 

\vskip 2ex
{\it Zeroth order normalization}

As in the light cone case, we first normalize $g_3\cdot m = {\bf 0} = c_0$.  We can clearly solve this equation by choosing $w = -m.$  This suffices since $g_1$ and $g_0$ are stabilizers of $o = {\bf 0}$ under the conformal action.

{\em First order normalization}

Continuing to the first order normalization, we need to find the first prolongation by differentiating the action (\ref{sphereact}) and then substitute $w = -m$.  The result is 
\[
g\cdot m' = \beta B m'
\]
If we choose $c_1 = e_1$, then   
\begin{equation}\label{alphac}
\beta^{-1} = \|m'\|
\end{equation}
and 
\begin{equation}\label{Ac}
B = \frac{1}{\|m'\|}\begin{pmatrix}m_1'&m_2'\\-m_2'&m_1'\end{pmatrix}.
\end{equation}

{\em Second order normalization}

We now differentiate the action again, substitute the already found values, and normalized to $c_2 = \begin{pmatrix}0\\0\\\end{pmatrix} = {\bf 0}$. 

The second prolonged action then becomes
\[
g\cdot m'' = \beta(\beta\eta \|m'\|^2 + B m'') + 2\eta_1 e_1
\]
which, when made equal to ${\bf 0}$ allows us to solve for $\eta$ 
\[
\eta_1 = \frac{m'\cdot m''}{\|m'\|^2}, \hskip 2ex \eta_2 = -\frac{1}{\|m'\|^2}\det \begin{pmatrix}m_1'&m_1''\\m_2'&m_2''\\\end{pmatrix}.
\]

We now have the explicit form of the moving frame
\begin{equation}
\label{conformalframe}
\rho_M = \begin{pmatrix}1&-m^T&\frac{1}{2}\|m\|^2\\\eta&B-\eta m^T&\frac{1}{2}\eta\|m\|^2-Bm\\\frac{1}{2}\eta_2 ^2&-\frac{1}{2}\eta_2 ^2m^T+\eta^TB&\frac{1}{4}\eta_2 ^2\|m\|^2 - \eta^TBm + 1\end{pmatrix},
\end{equation}
where $B$ and $\eta$ are given above.

\subsection{Maurer Cartan matrix and differential invariants}

As in the cone case, we can find the Maurer Cartan matrix associated to this moving frame and with it a generating system of differential conformal invariants.

\begin{theorem} If $K_M = (\rho_M)_x\rho_M^{-1}$, then
\begin{equation}\label{KM}K_M = \begin{pmatrix}0&-1&0&0\\ -\kk_1 &0&0&-1\\
-\kk_{2}&0&0&0\\0&-\kk_1&-\kk_2&0\end{pmatrix}
\end{equation} 
where 
\[
 \kk_1 = \frac{m'\cdot m'''}{\|m'\|^2} - \frac32\frac{(m'\cdot m'')^2}{\|m'\|^4}+\frac32 \frac{\det^2(m'\hskip1ex m'')}{\|m'\|^4}
\]
and
\[
\kk_2 = \frac{\det(m'\hskip1ex m''')}{\|m'\|^2}-3\frac{(m'\cdot m'')\det(m' \hskip1ex m'')}{\|m'\|^4}.
\]
The invariants $\kk_1$, $\kk_2$ and their derivatives functionally generate any other invariant for non-degenerate curves in the light cone.
\end{theorem}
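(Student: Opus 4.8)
The plan is to follow, step for step, the argument already carried out for the cone, using the recurrence relations of Theorem \ref{Knorm}; the only genuinely new feature is that the conformal action (\ref{sphereact}) is no longer linear, so its prolongations must be computed by hand rather than read off as matrix multiplication. The first step is to record the infinitesimal action. Writing a generic $X\in\g$ in the block form (\ref{alg}), with data $a$, $z$, $w$ and $B$, and differentiating (\ref{sphereact}) at the identity — equivalently, applying $X$ to the lift $\Lambda(m)$ of (\ref{projectmap}) and pushing it forward by $d\Pi$ — gives the vector field
\[
V_X(m) = z + (B + aI)m + \frac12\|m\|^2 w - (w^T m)\,m.
\]
Under the identification of $K_M$ with an element of $\g$, its entries are exactly the data $(a,z,w,B)$, which I treat as the unknowns. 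Since $G$ does not act on the parameter, the $r$-th prolonged infinitesimal action in (\ref{Knor}) is simply $D_x^r V_X$ evaluated on the invariantized jet, so only the value, Jacobian and Hessian of $V_X$ at the origin will enter, tested against the normalized jet $I_0={\bf 0}$, $I_1=e_1$, $I_2={\bf 0}$.

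Next I would run the recurrence (\ref{Knor}) order by order. At $r=0$ it reads $V_{K_M}({\bf 0})=-I_1=-e_1$, forcing $z=-e_1$ and fixing the top row and last column of $K_M$; this is the conformal counterpart of the cone normalization $k_0=1$. At $r=1$ it reads $DV_{K_M}({\bf 0})\,e_1 = -I_2+(I_1)' = {\bf 0}$; because $DV_X({\bf 0}) = B+aI$ does not see $w$, this yields $a=0$ and $B=0$, producing the vanishing diagonal and middle block of (\ref{KM}). The curvatures $\kappa_1,\kappa_2$ live in the block $w$ and therefore first appear at $r=2$, where the recurrence becomes $\mathrm{Hess}\,V_{K_M}({\bf 0})(e_1,e_1) = -I_3$. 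A short computation of the Hessian of the quadratic part of $V_X$ gives $(-w_1,w_2)$ on the left, so $w=(-\kappa_1,-\kappa_2)$ is pinned to the third-order invariant $I_3 = \rho_M\cdot m^{(3)}$. This is the point that differs structurally from the cone, where $k_1$ was a second-order invariant already fixed at $r=1$: here both invariants are third-order precisely because the second jet has been fully normalized to ${\bf 0}$.

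The main obstacle is then the explicit evaluation of $I_3 = \rho_M\cdot m^{(3)}$, the analogue of the cone computation of $c_3^3$ but now for a fractional-linear action: one must prolong (\ref{sphereact}) with $g=\rho_M$ to third order and simplify using the lower-order normalizations $\beta^{-1}=\|m'\|$, the formula (\ref{Ac}) for $B$, and the expressions for $\eta_1,\eta_2$. Carrying this out and reading off the two components should return exactly the stated rational expressions, the piece built from $m'\cdot m'''$ giving $\kappa_1$ and the piece built from $\det(m'\,m''')$ giving $\kappa_2$. A fully equivalent route is to differentiate the explicit frame (\ref{conformalframe}) and expand $(\rho_M)_x\rho_M^{-1}$ directly, which avoids the prolongation bookkeeping at the cost of a comparably long matrix calculation. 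Finally, the assertion that $\kappa_1,\kappa_2$ generate every other invariant requires no separate work: it is immediate from the general theorem (\cite{Hu}) that the coefficients of the Serret-Frenet equations already contain a basis for the differential invariants of the curve.
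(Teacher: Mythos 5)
Your proposal follows essentially the same route as the paper: run the recurrence of Theorem \ref{Knorm} on the infinitesimal conformal vector field $V_X(m)=z+(B+aI)m+\tfrac12\|m\|^2w-(w^Tm)m$ at orders $r=0,1,2$ to pin down $K_w=-e_1$, then $a=0$, $B=0$, and finally the lower-left block in terms of $I_3=\rho_M\cdot m'''$, whose explicit third-order prolongation yields $\kappa_1,\kappa_2$, with the generation claim dispatched by citing Hubert's theorem exactly as the paper does. The only quibble is a component-wise sign at $r=2$ (your own identity $(-w_1,w_2)=-I_3$ gives $w=((I_3)_1,-(I_3)_2)$ rather than $w=-I_3$), but the paper's proof is equally loose at that very step, so this stays within its conventions and does not affect the structure of the argument.
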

\begin{proof}
As before we can use the relation
\[
K_M\cdot I_r = -I_{r+1}+(I_r)_x
\]
to find $K_M$, where the action $\cdot$ indicates the infinitesimal prolonged action associated to (\ref{sphereact}). For $r=0$ this action is given by
\[
K_M\cdot c_0 = \frac12 K_\eta\|c_0\|^2 + K_Bc_0+K_w - c_0(-b+K_\eta^Tc_0) = -c_1+(c_0)_x
\]
where 
\[
K_M = \begin{pmatrix} b & K_w^T & 0\\ K_{\eta} & K_B & K_w\\ 0 & K_\eta^T & -b\end{pmatrix}
\]
with $K_B$ skew symmetric. Substituting $c_0 = 0$ and $c_1 = e_1$ we get $K_w = -e_1$.

Calculating the infinitesimal version of the first prolonged action results in the formula for $r = 1$, that is
\[
K_M \cdot c_1 = K_\eta c_0^Tc_1 + K_B c_1 -c_1(-b+K_\eta^Tc_0)-c_0K_\eta^Tc_1= -c_2 + (c_1)_x
\]
which after substituting $c_0 = c_2 = 0$ and $c_1 = e_1$ becomes $K_Be_1 = 0$, and $b=0$ that is $K_B = 0, b=0$ since $K_B$ is skew-symmetric.

Finally we use the case $r=2$. We need to calculate the infinitesimal version of the second prolonged action. It is given by
\[
K_M\cdot c_2 = K_\eta(c_1^Tc_1+c_0^Tc_2)+K_Bc_2-c_2(K_\eta^Tc_0-b)-2c_1K_\eta^Tc_1-c_0K_\eta^Tc_2 =   -c_3 + (c_2)_x 
\]
which becomes $K_\eta - 2K_\eta = -K_{\eta} = -c_3$ after normalization. Therefore $K_\eta = c_3 = \rho\cdot m'''$. We can calculate directly this product as before. If $\rho_M\cdot m''' = \frac EF$, a fraction given as in in (\ref{sphereact}), then using previous normalizations we obtain
\[
\rho_M\cdot m''' = \frac{E'''}F-3\frac{F''}{F}e_1.
\]
After long but straightforward calculations we obtain that 
\[
c_3 = \rho_M\cdot m''' = \begin{pmatrix}\kk_1\\\kk_2\end{pmatrix}
\]
where $\kk_1$ and $\kk_2$ are as given in the statement of the theorem. \end{proof}
\subsection{Invariant evolutions of conformal curves}

As in the case of the cone, we would like to find the formula for any invariant evolution of curves in the conformal sphere associated to the left moving frame $\rho^{-1}$ as in theorem \ref{invev}. The action this time is given by (\ref{sphereact}), and, as before, if $g = g_{-1}g_0g_1$ is as in (\ref{gleft}), then $d\Phi_g(o) = d\Phi_{g_{-1}}(o)d\Phi_{g_0}(o)d\Phi_{g_1}(o)$. In this case is is actually simpler. From the formula one can see that $g_{-1}$ acts by translation, and so $ d\Phi_{g_{-1}}(o) = I$. Also, 
\[
\Phi_{g_1}(m) = g_1\cdot m = \frac{m-\frac12\eta \|m\|^2}{1-\eta^Tm+\frac14 \|\eta\|^2\|m\|^2}
\]
and one can check directly that $d\Phi_{g_1}(o) = I$. Therefore, $d\Phi_g(o) = d\Phi_{g_0}(o)$. Finally, $\Phi_{g_0}(m) = \beta B m$, therefore $d\Phi_{\rho}(o) = \beta^{-1} B^{-1}$, where $\beta$ and $B$ are given as in (\ref{Ac}) and (\ref{alphac}). We just prove the following result.

\begin{theorem} Assume $m$ is a solution of a conformally invariant evolution of the form
\[
m_t = F(m, m', m'', \dots).
\]
Then
\begin{equation}\label{mev}
m_t = \begin{pmatrix} m_1' & -m_2'\\ m_2'&m_1'\end{pmatrix} \begin{pmatrix} s_1\\ s_2\end{pmatrix}
\end{equation} 
where $s_1$ and $s_2$ are functions of the differential invariants $\kappa_1$, $\kappa_2$ and their derivatives.
\end{theorem}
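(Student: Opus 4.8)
The plan is to apply Theorem \ref{invev} verbatim, mirroring the derivation of the cone formula (\ref{uevfor}). That theorem guarantees that any conformally invariant evolution of curves in $M = O(3,1)/H$ has the form $m_t = d\Phi_\rho(o)\,\rr$, where $\rho$ is the \emph{left} moving frame $\rho_M^{-1}$ (the inverse of the right frame $\rho_M$ built above), $o = \mathbf{0}$, and the entries of $\rr$ are arbitrary differential invariants, i.e. functions of $\kk_1$, $\kk_2$ and their $x$-derivatives. Thus the entire statement reduces to computing the linear map $d\Phi_{\rho_M^{-1}}(o)$ on $T_oM$ and recognizing it as the matrix appearing in (\ref{mev}); setting $\rr = (s_1,s_2)^T$ then yields the claim with $s_1,s_2$ of the asserted form.

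To compute $d\Phi_{\rho_M^{-1}}(o)$, I would factor the left frame as $\rho_M^{-1} = g_{-1}g_0g_1$ as in (\ref{gleft}) and use the chain rule $d\Phi_{g_{-1}g_0g_1}(o) = d\Phi_{g_{-1}}(o)\,d\Phi_{g_0}(o)\,d\Phi_{g_1}(o)$, reading each factor off the fractional-linear action (\ref{sphereact}). The translation factor $g_{-1}$ sends $m \mapsto m + \mathrm{const}$, so its differential at $o$ is the identity. The dilation-rotation factor $g_0$ acts by $m\mapsto \beta B m$, hence as a left-frame factor it contributes $\beta^{-1}B^{-1}$. The remaining special-conformal factor $g_1$ is the only one acting nonlinearly near $o$, and the crux is to check that it too has trivial differential there.

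The main (indeed the only) step requiring genuine computation is this last check, $d\Phi_{g_1}(o) = I$, where $g_1$ acts by the rational map
\[
\Phi_{g_1}(m) = \frac{m - \tfrac12\eta\|m\|^2}{1 - \eta^T m + \tfrac14\|\eta\|^2\|m\|^2},
\]
which fixes $o = \mathbf{0}$. Since the numerator vanishes at $o$, the Jacobian of the ratio at $o$ equals the Jacobian of the numerator alone (the linear part of the denominator is annihilated because the numerator is zero at the origin), and the numerator has linear part exactly $m$; hence $d\Phi_{g_1}(o) = I$. Everything else is routine linear algebra: assembling the three factors gives $d\Phi_{\rho_M^{-1}}(o) = \beta^{-1}B^{-1}$, and substituting $\beta^{-1} = \|m'\|$ from (\ref{alphac}) together with $B$ from (\ref{Ac}) and inverting the rotation-type matrix $B$ collapses the product to $\begin{pmatrix} m_1' & -m_2'\\ m_2' & m_1'\end{pmatrix}$, precisely the matrix in (\ref{mev}). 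Writing $\rr = (s_1,s_2)^T$ then completes the argument.
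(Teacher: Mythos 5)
Your proposal is correct and follows essentially the same route as the paper: invoke Theorem \ref{invev}, factor the left frame as $g_{-1}g_0g_1$, observe that the translation and special-conformal factors have identity differential at $o$ (your quotient-rule justification for $d\Phi_{g_1}(o)=I$ just makes explicit what the paper leaves as a direct check), and read off $d\Phi_{\rho_M^{-1}}(o)=\beta^{-1}B^{-1}=\begin{pmatrix} m_1' & -m_2'\\ m_2' & m_1'\end{pmatrix}$ from (\ref{alphac}) and (\ref{Ac}).
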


As before, we can find the evolution that $m_t$ induces on the invariants $\kappa_i$, using theorem \ref{Nsplit} and the structure equations.

Consider the section of the sphere
\[
\s(m) = \begin{pmatrix} 1&m^T & \frac12 \|m\|^2\\ 0& I&m\\ 0&0& 1\end{pmatrix}
\]
so that
\[
d\s(o)v = \begin{pmatrix} 0&v^T&0\\ 0&0&v\\ 0&0&0\end{pmatrix}.
\]
According to Theorem \ref{Nsplit}, if $m(t,x)$ satisfies equation (\ref{mev}), and it $\rho_M(t,x)$ is the flow of right moving frames associated to $m$, then
\[
N = -(\rho_M)_t\rho_M^{-1} = \begin{pmatrix} \alpha & \bs^T & 0\\ n&N_0&\bs\\ 0&n^T&-\alpha\end{pmatrix}
\]
where $\bs = (s_1, s_2)^T$. Furthermore, if $K = -K_M$ is the left Maurer-Cartan matrix, the structure equations
\[
K_t = N_x + [K,N]
\]
will be identical to those of the cone, with $k_0 = 1$, $\bs = (s_1, s_2)^T$ taking the role of $(r_1, r_2)^T$ and $-\alpha$ substituting $r_3$. That is, the evolution of $\kappa_i$ will coincide with that of $k_i$ if $k_0 = 1$ with $s_i$ instead of $r_i$. The equation also forces the value $\alpha = s_1'$. These conclusions prove the following theorem. 

\begin{theorem} If $m(t,x)$ satisfies the equation (\ref{mev}), then its invariants $\kappa_1$, $\kappa_2$ satisfy the equation
\[
\begin{pmatrix} \kappa_1\\ \kappa_2\end{pmatrix}_t = P \begin{pmatrix} s_1\\ s_2\end{pmatrix}
\]
where $P$ is the operator given in (\ref{P}).
\end{theorem}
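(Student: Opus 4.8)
The plan is to reduce the statement to the structure-equation computation already carried out for the cone, via Theorem \ref{Nsplit}. First I would observe that the evolution (\ref{mev}) is manifestly of the form (\ref{guev}): with $d\Phi_\rho(o)=\beta^{-1}B^{-1}=\begin{pmatrix} m_1' & -m_2'\\ m_2'&m_1'\end{pmatrix}$, equation (\ref{mev}) reads $m_t=d\Phi_\rho(o)\bs$ with $\rr=\bs=(s_1,s_2)^T$. Hence Theorem \ref{Nsplit} applies to the left moving frame $\rho_M^{-1}$. Writing $N=-(\rho_M)_t\rho_M^{-1}=(\rho_M^{-1})^{-1}(\rho_M^{-1})_t$ and using the section $\s$ together with the splitting $\g=\h\oplus\m$ with $\m=d\s(o)(T_oM)$, the theorem forces the $\m$-component of $N$ to be $N_m=d\s(o)\bs$. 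Since $N\in\g$ must have the block shape (\ref{alg}) and $d\s(o)v=\begin{pmatrix} 0&v^T&0\\ 0&0&v\\ 0&0&0\end{pmatrix}$, this pins the $z$-entries of $N$ to be $\bs$, leaving
\[
N=\begin{pmatrix} \alpha & \bs^T & 0\\ n & N_0 & \bs\\ 0 & n^T & -\alpha\end{pmatrix},
\]
with $\alpha$, $n=(n_1,n_2)^T$ and the skew part $N_0$ still to be determined.

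Next I would impose the compatibility (structure) equation $K_t=N_x+[K,N]$, with $K=-K_M$ the left Maurer--Cartan matrix, whose only nonconstant entries are $-\kk_1$ and $-\kk_2$. Reading off the entries of this matrix identity in the appropriate slots does two things at once: the $\h$-slots express $\alpha$, $n_0$, $n_1$, $n_2$ algebraically in terms of $\bs$ and the invariants, while the remaining slots yield the evolution of $\kk_1$ and $\kk_2$. In particular the $(1,2)$-slot, which carries the constant top entry of $K_M$, gives $0=s_1'-\alpha$, so $\alpha=s_1'$ is forced. This is the conformal shadow of the cone's arc-length condition (\ref{alpre}) at $k_0=1$: it is automatic here because the normalization has made the top invariant identically equal to $1$ rather than a free function $k_0$, so its $t$-derivative vanishes.

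The decisive observation is that this entire computation is formally identical to the cone computation that produced (\ref{kev}) and then (\ref{P}). Indeed $K_M$ in (\ref{KM}) is obtained from $K_L$ in (\ref{KL}) by replacing the invariant $k_0$ with the constant $1$ and each $k_i$ with $\kk_i$, and the $N$ displayed above has exactly the block form used for the cone with $\bs$ in place of $\rr$ and $-\alpha$ in place of $r_3$. Hence each entry of $N_x+[K,N]$ matches, term for term, the corresponding cone entry under the dictionary $k_0\mapsto 1$, $r_i\mapsto s_i$, $r_3\mapsto-\alpha$, $k_i\mapsto\kk_i$. Substituting $k_0=1$ and $r_3=-r_1'$ into the cone output collapses (\ref{kev}) precisely to the operator $P$ of (\ref{P}), yielding $(\kk_1,\kk_2)^T_t=P(s_1,s_2)^T$, as claimed. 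The only thing that needs care is the bookkeeping: I must verify that the constant entries of $K_M$ sit in exactly the positions occupied by $k_0$ in $K_L$, and that the $\m$/$\h$ splitting of $N$ lines up with the cone's, so that the earlier calculation can be imported verbatim rather than redone. This matching of positions is the main (modest) obstacle; once it is checked, the conclusion is immediate.
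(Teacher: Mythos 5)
Your proposal is correct and follows essentially the same route as the paper: the paper likewise applies Theorem \ref{Nsplit} with the section $\s(m)$ to pin the $\m$-component of $N=-(\rho_M)_t\rho_M^{-1}$ to $\bs$, imposes the structure equation $K_t=N_x+[K,N]$ with $K=-K_M$, notes that the constant top entry of $K_M$ forces $\alpha=s_1'$, and concludes that the computation is identical to the cone case under the substitutions $k_0=1$, $r_i\mapsto s_i$, $r_3\mapsto-\alpha$, yielding the operator $P$.
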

\section{The relationship between the two geometries}

As we just saw, the evolution induced on $\kappa_i$ by the invariant evolution $m_t$ coincides with that of $k_i$ induced by $u_t$ whenever $r_i = s_i$, $i = 1,2$, and whenever $u$ is parametrized by arc-length. In this section we will show that, in fact, $k_1$ are taken to $\kappa_i$ under projectivization and the curve evolutions themselves $u_t$ and $m_t$ are also related by projectivization.                   

\subsection{Differential invariants}

We will show that the two remaining invariants of the cone and the sphere are 1-to-1 related if we re-parameterize to have $\|u'\|_J = 1$. Let $\tilde m$ be given by
\begin{equation}
 \tilde{m} = \begin{pmatrix}\frac{1}{2}\|m\|^2\\m_1\\m_2\\1\end{pmatrix}
 \end{equation}
 and let
 \(
 {u} = \tilde{m}u_3, m = (m_1,m_2)^T\) be an associated lift and its coordinates.
 Notice that $\|\tilde m'\|_J = \|m'\|$, $\pr{\tilde m'}{\tilde m''} = \langle{ m'},{ m''}\rangle$, and likewise for higher order derivatives. Notice also that, if $\|u'\|_J = 1$, then
\[
1 = {u}'^TJ{u}' = (u_3\tilde{m})'^TJ(u_3\tilde{m}) = u_3^2\|\tilde{m}'\|_J^2 + 2u_3u_3'\pr{\tilde{m}}{\tilde{m}'}+u_3'^2\|\tilde{m}\|_J^2
\]
\[= u_3^2\|\tilde m'\|_J^2 .
\]
Therefore $u_3 = \frac{1}{\|\tilde{m}'\|_J}$ uniquely ensures that the lift of $m$ to the cone  defined by $u$ is parametrized by arc-length. Let us call that lift $\Lambda: M \to C_L$, $\Lambda(m) = u = \frac{\tilde{m}}{\|\tilde m'\|_J}$.

Next, 
$$
\pr{u'}{u'''} = 3u_3''u_3\|m'\|^2+3u_3'u_3'\pr{\tilde{m}}{\tilde{m}''}+u_3^2\langle{m'},{m'''}\rangle.
$$  
So 
$$
k_1 = \frac{\pr{u'}{u'''}}{2} = -\frac{3}{4}\frac{(\|m'\|^2)_x^2}{\|m'\|^4} + \frac{\pr{m'}{m'''}}{\|m'\|^2} + \frac{3}{2}\frac{\|m''\|^2}{\|m'\|^2} = \kk_1
$$ 
where we have used the relation 
$$\pr{m'}{m''}^2+\det(m"\hskip1ex m'')^2 = \|m''\|^2\|m'\|^2.
$$ 

Equally
\[
k_2 = u_3^2\det(m'\hskip1ex m'') + 3 u_3u_3'\det(m'\hskip1ex m'') 
\]
\[
= \frac{\det(m'\hskip1ex m''')}{\|m'\|^2}-3\frac{m'\cdot m''}{\|m'\|^4}\det(m'\hskip1ex m'') = \kk_2.
\]
\subsection{Invariant evolutions}
We will now relate invariant evolutions in both the cone and sphere as reflected in the following theorem.
\begin{theorem}
Assume $\bu$ evolves following the equation (\ref{uevfor}).
  Assume $r_3 = r_1'$ so that the evolution preserves arc-length, and assume further that our initial condition satisfies $\|u'\|_J = 1$. Let $m = \Pi (u)$ be the projectivization of $u$. Then $m$ satisfies equation (\ref{mev}) with $s_i = r_i$, $i = 1,2$. And vice-versa, a flow $m$ in the sphere can be lifted uniquely to a flow $u$ in the cone parametrized by arc-length and the lifting takes the $m$ evolution to the $\bu$ evolution.
\end{theorem}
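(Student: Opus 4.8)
The plan is to prove the two directions separately: the forward statement is an essentially direct projection of the cone evolution, while the converse follows from the equivariance of the arc-length lift together with the forward computation.

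For the forward direction I would apply the differential of $\Pi$ directly to (\ref{uevfor}). Since $m=\Pi(u)$ means $\hu=u_3 m$, the differential of $\Pi$ at $u$ sends a tangent vector with blocks $(\hat w,w_3)$ to $\frac{1}{u_3}(\hat w-m\,w_3)$. Reading the blocks of (\ref{uevfor}), namely $\hat w=(A^{-1}-\hu\xi^T)(r_1,r_2)^T+\hu\,r_3$ and $w_3=-u_3\xi^T(r_1,r_2)^T+u_3 r_3$, and substituting $\hu=u_3 m$, the terms carrying $\xi$ and those carrying $r_3$ cancel identically, leaving
\[
m_t=\frac{1}{u_3}A^{-1}\begin{pmatrix} r_1\\ r_2\end{pmatrix}.
\]
This cancellation is the conceptual heart of the argument, and it does not yet use arc-length. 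It then remains to identify the $2\times 2$ matrix $\frac{1}{u_3}A^{-1}$ with the conformal frame matrix of (\ref{mev}). Using the explicit formula (\ref{A}) together with $m_i'=u_3^{-2}\det\begin{pmatrix} u_3 & u_i\\ u_3' & u_i'\end{pmatrix}$, one finds $A=\frac{u_3}{\|u'\|_J}\begin{pmatrix} m_1' & m_2'\\ -m_2' & m_1'\end{pmatrix}$; since $A\in O(2)$ we have $A^{-1}=A^T$, so $\frac{1}{u_3}A^{-1}=\frac{1}{\|u'\|_J}\begin{pmatrix} m_1' & -m_2'\\ m_2' & m_1'\end{pmatrix}$, and the hypothesis $\|u'\|_J=1$ removes the scalar. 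Comparing with (\ref{mev}) and noting that the frame matrix is invertible (its determinant is $\|m'\|^2\neq 0$), I conclude $s_i=r_i$.

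For the converse I would argue by equivariance rather than recomputing. The lift $\Lambda$ of the previous subsection assigns to a sphere curve $m$ the unique cone curve $u$ with $\Pi(u)=m$ and $\|u'\|_J=1$. Because every $g\in O(3,1)$ acts linearly (hence commutes with $d/dx$) and preserves both projectivization and $\|\cdot\|_J$, the curve $g\cdot\Lambda(m)$ is again an arc-length lift of $g\cdot m$, so uniqueness gives $g\cdot\Lambda(m)=\Lambda(g\cdot m)$; that is, $\Lambda$ is equivariant. Consequently, lifting a conformally invariant flow $m_t$ yields a Lorentzian-invariant flow $u_t=\Lambda(m)_t$ which preserves arc-length by construction. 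By Theorem \ref{invev} this $u_t$ has the form (\ref{uevfor}), and the arc-length constraint (\ref{alpre}) with $k_0=1$ fixes $r_3$ in terms of $r_1'$. Applying the forward computation to this particular $u$ and using the invertibility of the frame matrix forces $r_i=s_i$, which establishes the lifting statement.

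The step I expect to require the most care is not the forward cancellation, which is short, but the justification in the converse that $\Lambda$ — which involves differentiation in $x$, not a pointwise map — genuinely sends an invariant sphere flow to an invariant cone flow of the canonical form (\ref{uevfor}). The equivariance argument above handles this cleanly, but one must check that the induced parametrized evolution $u_t=\Lambda(m)_t$ really keeps $\|u'\|_J\equiv 1$ along the flow (so that $u$ stays in the arc-length slice and Theorem \ref{invev} applies verbatim); this is exactly what the uniqueness of the arc-length lift guarantees. A secondary bookkeeping point is the consistent treatment of the sign in the arc-length condition: the forward direction never uses $r_3$ since it cancels, so the correspondence $s_i=r_i$ for $i=1,2$ is unaffected, and the relation between $r_3$ and $r_1'$ is only invoked to certify arc-length preservation via (\ref{alpre}).
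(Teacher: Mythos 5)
Your forward computation is the same as the paper's: project $u_t$ through $\Pi$, observe that the $\xi$ and $r_3$ terms cancel identically to leave $m_t=\frac{1}{u_3}A^{-1}(r_1,r_2)^T$, and then identify this matrix with the conformal frame matrix of (\ref{mev}) using the arc-length normalization (the paper reads off $A^{-1}$ from $A^Te_1=\frac{u_3}{\|u'\|_J}m'$ plus orthogonality and then sets $u_3=\|m'\|^{-1}$; you compute the full matrix from (\ref{A}) and set $\|u'\|_J=1$ --- these are equivalent since $\|u'\|_J=u_3\|m'\|$). Where you genuinely diverge is the converse: the paper dismisses it with ``we can clearly walk our way back,'' i.e.\ an implicit reverse computation through the unique lift $u_3=\|m'\|^{-1}$, whereas you supply an actual argument, namely that the arc-length lift $\Lambda$ is $O(3,1)$-equivariant (linearity of the action, preservation of $\|\cdot\|_J$, compatibility with $\Pi$, plus uniqueness of the positive lift on the upper cone), so the lifted flow is Lorentz-invariant and stays in the slice $\|u'\|_J\equiv 1$, after which Theorem \ref{invev}, the forward cancellation, and invertibility of the frame matrix force $r_i=s_i$. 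This buys a cleaner justification of exactly the point the paper glosses over --- that the lifted evolution is of the canonical invariant form (\ref{uevfor}) --- at the mild cost of having to note that Theorem \ref{invev} applies to the flow restricted to (or invariantly extended off) the arc-length slice. Your remark that $r_3$ cancels in the projection, so the sign discrepancy between the theorem's hypothesis $r_3=r_1'$ and equation (\ref{alpre}) ($r_3=-r_1'/k_0$) is immaterial to the conclusion $s_i=r_i$, is also correct and worth keeping.
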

\begin{proof}
Assume $\bu$ satisfies (\ref{uevfor}) and let $m = \frac 1{u_3} \hat u$, where, as before, $\bu = \begin{pmatrix} \hat u\\ u_3\end{pmatrix}$. Then
\[
m_ t = \frac 1{u_3}\hat u_t - \frac {(u_3)_t}{u_3^2} \hat u = \frac1{u_3}\left((A^{-1}-\hat u\xi^T)\begin{pmatrix} r_1\\ r_2\end{pmatrix} + r_3 \hat u\right) + \frac 1{u_3^2} \left(u_3\xi^T \begin{pmatrix} r_1\\ r_2\end{pmatrix} - u_3 r_3\right) \hat u
\]
\[
= \frac 1{u_3} A^{-1} \begin{pmatrix} r_1\\ r_2\end{pmatrix}
\]
the matrix $A$ given as in (\ref{A}). Now, from (\ref{A}) we can see that
\[
A^{T}\begin{pmatrix} 1\\ 0\end{pmatrix} = \frac1{\|u'\|_J}\left(\hat u'-\frac{u_3'}{u_3} \hat u\right) = \frac{u_3}{\|u'\|_J}\left(\frac{\hat u}{u_3}\right)' = \frac {u_3}{\|u'\|_J} m'.
\]
Since $A^T = A^{-1}$, this determines $A^{-1}$ completely. Finally, if $u$ is parametrized by arc-length, 
\[
\|u'\|_J^2 = \hat u'\cdot \hat u' - 2 u_3'\left(\frac1{u_3} \hat u\cdot \hat u' - \frac{u_3'}{2u_3^2}\|\hat u\|^2\right) = \|m'\|^2 u_3^2.
\]
Therefore $u_3 = \|m'\|^{-1}$ and the formula for the evolution of $m$ is as in (\ref{mev}). We can clearly walk our way back to obtain the second part of the theorem using the unique lifting $m \to u$ where $u_3 = \|m'\|^{-1}$.

\end{proof}

\subsection{Geometric Hamiltonian structures}

As we explained in section \ref{geomst}, any space of invariants $\K$ associated to a homogeneous space $G/H$ inherits a Poisson structure linked to invariant evolutions of curves in $G/H$ as in theorem \ref{greal}. In this section we will show that the projectivization map $\Pi$ induces a Poisson isomorphism between the space of invariants of curves parametrized by arc-length in the light cone (as given by the section $K_L$ in (\ref{KL}) with $|\!|u'|\!|_J = 1$) and the space of invariants of unparametrized curves in the conformal sphere (as given by the section $K_M$ in (\ref{KM})). Let's denote the former by $\K_L^1$ and the latter by $\K_M$.

\begin{theorem} The projectivization map $\Pi$ induces a Poisson map between the spaces of differential invariants $\K_L^1$ and $\K_M$ endowed with their respective geometric Poisson brackets. Furthermore, $\K_L$ with its geometric Poisson bracket foliates into Poisson submanifolds corresponding to constant values of $k_0$ and the Poisson sunmanifold for $k_0 = 1$ is Poisson equivalent to $\K_M$.
\end{theorem}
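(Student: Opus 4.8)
The plan is to reduce everything to a single observation at the level of Maurer-Cartan matrices. Setting $k_0=1$ in (\ref{KL}) gives exactly the matrix (\ref{KM}) with $k_i$ occupying the slots of $\kk_i$, so $K_L|_{k_0=1}$ and $K_M$ are literally the same affine subspace of $\Lo\g^\ast$; this is the infinitesimal shadow of the invariant identification $k_i=\kk_i$ proved in Section 5.1. Both geometric brackets are reductions of the one bracket (\ref{br1}) on $\Lo\g^\ast$ by Theorem \ref{Kred}, so the whole statement becomes a comparison of two reductions carried out on a common space. First I would record how the two reductions differ: comparing the stabilizer of $e_4$ under the linear action (\ref{linear}) with the stabilizer of the origin under the conformal action (\ref{sphereact}) shows $H_L\subset H_M$, the extra factor being precisely the dilation block $g_0$ with $\alpha\neq1$ in (\ref{decomposed}). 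Hence $H_M/H_L$ is the one-parameter dilation group and $\K_M$ is obtained from $\K_L$ by one further gauge reduction.

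With this in hand I would treat the foliation via the residual dilation symmetry. Computing the infinitesimal gauge action $\delta K_L=h_x+[K_L,h]$ for the dilation generator $h=\mathrm{diag}(a,0,0,-a)$ and projecting back to the normalized slice along the $\Lo H_L$-orbit (which leaves the $\Lo H_L$-invariant coordinate $k_0$ untouched), one finds $\delta k_0=-ak_0$, so the dilation orbits are transverse to the level sets $\{k_0=\mathrm{const}\}$ and move $k_0$ alone among the generators. This is the mechanism that should make $k_0$ a Casimir of the bracket on $\K_L$: the one gauge direction collapsed in passing to $\K_M$ is exactly the one changing $k_0$, so $\{k_0,\cdot\}=0$, $\K_L$ foliates into the Poisson submanifolds $\{k_0=\mathrm{const}\}$, and reduction in stages identifies the leaf $\{k_0=1\}$, as a reduced Poisson space, with $\K_M$.

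To make this a genuine Poisson \emph{equivalence} rather than only a set-theoretic one, I would match the reduced operators numerically. The structure-equation computation producing (\ref{P}) already shows that an arc-length-preserving invariant flow in the cone with data $(r_1,r_2)$ induces $(k_1,k_2)_t=P(r_1,r_2)^T$, and the parallel computation in Section 4 gives $(\kk_1,\kk_2)_t=P(s_1,s_2)^T$ on the sphere. Using Theorem \ref{greal} together with the compatibility condition (\ref{compatibility}) to identify the realization data $(r_1,r_2)$ with the $\m$-component of $\frac{\delta\HH}{\delta L}$, and hence with $\frac{\delta h}{\delta\bok}$ on the $k_0=1$ leaf, I would read off that the reduced Poisson operator is $P$ in both geometries. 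Since Section 5.2 already carries the cone flow to the sphere flow with $s_i=r_i$, the bijection $k_i\mapsto\kk_i$ then intertwines the two brackets, which is precisely the assertion that $\Pi$ is a Poisson isomorphism of $\K_L^1$ with $\K_M$.

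The hard part, I expect, is the last identification rather than the bookkeeping. On the full space $\K_L$ the naive operator sending $(r_1,r_2,r_3)$ to $(k_0,k_1,k_2)_t$ read off from (\ref{kev}) is not skew-adjoint, so the dependence of the realization data $\rr$ on $\frac{\delta\HH}{\delta L}$ through (\ref{Hcond}) is genuinely nontrivial there, and I cannot simply declare $\rr=\frac{\delta h}{\delta\bok}$. The content of the argument is that this dependence trivializes once one restricts to the arc-length leaf, leaving the reduced operator equal to $P$ and not to $P$ composed with some auxiliary operator. Confirming this---by solving (\ref{Hcond}) explicitly on the cone to pin down $\frac{\delta\HH}{\delta L}$, checking skew-adjointness of the resulting bracket, and verifying that $k_0$ drops out as a Casimir---is the computational heart of the proof.
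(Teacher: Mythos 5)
Your overall architecture is the paper's: both geometric brackets are reductions of (\ref{br1}) via Theorem \ref{Kred}, the inclusion $H_L\subset H_M$ realizes $\K_M$ as the affine slice $k_0=1$ of $\K_L$, and the real content is the explicit determination of the variational derivative of a gauge-invariant extension from (\ref{Hcond}) --- which you correctly single out at the end as the computational heart. That is precisely what the paper does: it writes $\frac{\delta\HH_L}{\delta L}(K)$ in the form (\ref{HL}), solves the scalar equations coming from (\ref{Hcond}) for the undetermined entries $c,d,e$ in terms of $h_1,h_2$ and $k_0$, and substitutes into (\ref{br1}) to produce a $3\times 3$ tensor whose $k_0$ row and column vanish; the Casimir property, the foliation, and the identification of the $k_0=1$ leaf with $(\K_M,P)$ are all read off from that matrix. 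Your alternative packaging of the last step --- feeding (\ref{compatibility}) into (\ref{kev}) to get $(k_1,k_2)_t=P(\tfrac12 h_1,\tfrac12 h_2)^T$ and $(k_0)_t=r_1'+r_3k_0=0$ --- is viable and arguably more transparent, but it is not a shortcut: the entries $r_3=-c$ and $d,e$ are still only pinned down by solving (\ref{Hcond}), so it is the same computation in different clothes.

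One intermediate step would not survive as written. The inference in your second paragraph --- the residual dilation direction in $\Lo(H_M/H_L)$ moves $k_0$ alone, therefore $\{k_0,\cdot\}=0$ --- is a non sequitur. Quotienting $\K_L$ by the residual gauge group produces a \emph{quotient} Poisson structure (the bracket of invariant functions is invariant), which is a priori a different thing from $\K_M$ being a Poisson \emph{submanifold} of $\K_L$; and the transversality of the collapsed gauge direction to the level sets of $k_0$ says nothing by itself about $k_0$ Poisson-commuting with everything. The paper is explicit on this point: the abstract reduction-in-stages argument only shows that the bracket on $\K_M$ is a \emph{further reduction} of the one on $\K_L$, and the fact that this further reduction is a restriction to a Poisson submanifold ``can be easily seen by finding it explicitly'' --- i.e.\ only the computation establishes it. Since you ultimately defer to that computation, the proposal is sound in outline, but the dilation heuristic should be demoted from argument to motivation.
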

\begin{proof}

Notice that both geometric Poisson brackets are reductions of the same bracket, namely (\ref{br1}), to the quotients $\hat U/\Lo H_L$ and $\tilde U/\Lo H_M$ where $\hat U$ and $\tilde U$ are open subsets of $\Lo\oa(3,1)^\ast$ (since these are local results, we will define the manifolds as given by the non-void intersection $U = \hat U\cap \tilde U$. See \cite{M1}). Notice also that $H_L \subset H_M$, in fact $H_L$ is a subgroup of $H_M$. Therefore, there is a natural inclusion 
\[
\K_M \equiv U/\Lo H_M \subset  U/\Lo H_L \equiv\K_L.
\]
This inclusion is easily described by $\K_M$ as an affine subspace of $\K_L$ (given by $k_0 = 1$). 

Now, from theorem \ref{Kred} we know that (\ref{br1}) reduces to both $\K_L$ and $\K_M$, and hence we can conclude that the geometric bracket in $\K_M$ is a further reduction of the bracket in $\K_L$ to the subspace $k_0 = 1$.

In fact this further reduction is simply a restriction as $\K_M$ is a Poisson submanifold of $\K_L$. This can be easily seen by finding it   explicitly.
         
The cone is isomorphic to $O(3,1)/H_L$, where $H_L$ in this case is the isotropy group of $e_4$ under the linear action.  That is, it corresponds to $\alpha = 1$, $v = 0$ in (\ref{decomposed}). We notice that the algebra $\h_L$ is given by
$z = 0$, $a = 0$ in (\ref{alg}). Therefore $\h_L^0$ is given, under identification with its dual using the trace, by matrices of the form
\[
\begin{pmatrix} \ast&0&0\\ \ast & 0 & 0\\ 0&\ast&\ast\end{pmatrix}.
\]

To calculate the reduction of the bracket (\ref{br1}) we will need to use constant extensions to the gauge leaves of functionals defined on $\K_L$. From the form of $K_L$ in (\ref{KL}), any such an extension will need to have a variational derivative of the form
\begin{equation}\label{HL}
\frac{\delta \HH_L}{\delta L}(K) = \begin{pmatrix}  c&\frac12 h_1&\frac12 h_2&0\\ \frac12 h_0&0&d&\frac12 h_1\\ e & -d&0&\frac12 h_2\\ 0&\frac12 h_0&e&-c\end{pmatrix}
\end{equation}
where $h_i = \frac{\delta h}{\delta k_i}(\bok)$.
If the extension is constant on the gauge leaves, infinitesimally we will have
\[
\left(\frac{\delta \HH_L}{\delta L}(K)\right)_x + \left[K, \frac{\delta \HH_L}{\delta L}(K)\right]\in \h_L^0
\]
where $K = -K_L$ since we are considering left Maurer-Cartan matrices. This condition splits into the following equations for the entries of $\frac{\delta \HH_L}{\delta L}(K)$, 
\[
\frac12 h_1' - k_0c = 0, \hskip 2ex \frac12 h_2' + k_0d = 0, 
\]
\[
 k_0e +d' -\frac12(k_2h_1-k_1h_2)=0
 \]
and they allow us to solve for the entries of $\frac{\delta \HH_L}{\delta L}(K)$
 \[
c = \frac 1{2k_0} h_1', \hskip 2ex d = -\frac1{2k_0} h_2', \hskip 2ex e = \frac1{k_0}\left(\frac1{2k_0} h_2'\right)'+\frac1{2k_0} (k_2h_1-k_1h_2).
 \]
 
 If two such an extensions are found, we merely need to use (\ref{br1}) to find our reduced bracket. That is, let $h$ and $f$ be two functionals on $\K_L$ and let $\HH_L$ and $\F_L$ be two extensions found as above. Then
 \[
 \{h, f\}(\bok) = \int_{S^1} \mathrm{trace}\left(\frac{\delta \F_L}{\delta L}(K)\left(\left(\frac{\delta \HH_L}{\delta L}(K)\right)_x + \left[K, \frac{\delta \HH_L}{\delta L}(K)\right]\right)\right) dx
 \]
 
\[
= \int_{S^1}\left(f_1(\frac12 h_0'+k_1c-k_2d)+\frac1{k_0}f_1'(c'+\frac12 k_0h_0 - \frac12 (k_1h_1+k_2h_2)) + f_2(e'+k_2c+k_1d)\right)dx
\]
\[
= \frac12 \int_{S^1} \begin{pmatrix}f_0& f_1&f_2\end{pmatrix} \begin{pmatrix}0&0&0\\ 0&-D\frac1{k_0} D\frac1{k_0}D+D\frac{k_1}{k_0}+\frac{k_1}{k_0}D& \frac{k_2}{k_0} D + D\frac{k_2}{k_0}\\ 0&\frac{k_2}{k_0} D + D\frac{k_2}{k_0} & D\frac1{k_0}D\frac1{k_0}D - D\frac{k_1}{k_0}-\frac{k_1}{k_0}D\end{pmatrix}\begin{pmatrix}h_0\\ h_1\\ h_2\end{pmatrix} dx.
\]
 \end{proof}
A direct consequence of this theorem is the following corollary.
\begin{corollary}
If an invariant evolution of curves in the cone induces a Hamiltonian evolution on its invariants, the evolution needs to preserve arc-length. \end{corollary}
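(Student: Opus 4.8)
The plan is to read the corollary straight off the explicit form of the geometric Poisson bracket on $\K_L$ computed in the preceding theorem, with essentially no new calculation. Recall that bracket was presented as
\[
\{h,f\}(\bok)=\tfrac12\int_{S^1}(f_0,f_1,f_2)\,\PP\,(h_0,h_1,h_2)^T\,dx,
\]
where $h_i=\delta h/\delta k_i$, $f_i=\delta f/\delta k_i$, and $\PP$ is the $3\times 3$ matrix differential operator displayed there, acting on functionals of $\bok=(k_0,k_1,k_2)$. The single structural fact I would isolate is that both the first row and the first column of $\PP$ vanish identically.

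First I would make precise what it means for the induced evolution on the invariants to be Hamiltonian: the hypothesis supplies a functional $h:\K_L\to\R$ such that the induced flow equals (a constant multiple of) the Hamiltonian vector field $\PP\,(\delta h/\delta\bok)$, i.e. the evolution of each of $k_0,k_1,k_2$ is the corresponding component of $\PP$ applied to $(h_0,h_1,h_2)^T$. Reading off the first component and using that the first row of $\PP$ is zero then gives at once $(k_0)_t=0$.

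It remains to identify $(k_0)_t=0$ with preservation of arc-length. Since $k_0=\|u'\|_J$ is precisely the arc-length density along the curve, $(k_0)_t=0$ says the arc-length element is unchanged by the flow; equivalently, comparing with the earlier formula $(k_0)_t=r_1'+r_3k_0$, this forces the relation (\ref{alpre}), namely $r_3=-r_1'/k_0$, which is exactly the arc-length-preserving condition identified above. Hence any invariant evolution whose induced flow on the invariants is Hamiltonian must preserve arc-length.

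I do not expect a genuine obstacle: the entire content is carried by the shape of $\PP$, which exhibits $k_0$ as a Casimir of the geometric bracket, so that the symplectic leaves lie inside the level sets of $k_0$ — precisely the foliation recorded in the theorem. The only point that requires a little care is the reading of the hypothesis ``induces a Hamiltonian evolution on its invariants'': one must interpret it as asserting that the full induced evolution of $\bok=(k_0,k_1,k_2)$, and not merely its $(k_1,k_2)$-part, is a Hamiltonian vector field with respect to the bracket on $\K_L$. Once this is granted, the vanishing first row of $\PP$ closes the argument.
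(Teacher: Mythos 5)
Your proposal is correct and follows essentially the same route the paper intends: the corollary is stated as a ``direct consequence'' of the theorem precisely because the computed Poisson tensor on $\K_L$ has vanishing first row and column, making $k_0$ a Casimir, so any Hamiltonian flow satisfies $(k_0)_t=0$, which via $(k_0)_t=r_1'+r_3k_0$ is exactly the arc-length-preservation condition (\ref{alpre}). Your care about interpreting the hypothesis as Hamiltonianity of the full $(k_0,k_1,k_2)$ evolution is the right reading and matches the paper's framework in Theorem \ref{greal}.
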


This corollary is somehow surprising. Although it is true that the invariant projective subspace is a Poisson submanifold of the cone invariants, there was no a-priori reason why the cone should not have its own evolutions inducing Hamiltonian evolutions on its invariants and not necessarily preserving arc-length. After all, invariants in the cone constitute a Poisson manifold on their own independently from the sphere. What this calculation tells us is that it is, in fact, the invariant conformal Poisson manifold in disguise and geometric realizations of Hamiltonian systems that do not preserve arc-length do not exist.

\section{A completely integrable flow by non-radial curves on the light cone}

The conformal sphere hosts conformal geometric realizations of a system of complexly coupled KdV equations. This was known originally in \cite{M3}, and it was proved later in \cite{M2} for any dimension. The relation between cone and conformal sphere allows us to translate this realization to the cone and to obtain an geometric realization of a complexly coupled KdV system by star-shaped flows in the cone. The following theorem follows directly from our calculations.

\begin{theorem}
  Consider the Hamiltonian functional 
\[
h(\bok) = \frac12\int_{S^1} (k_1^2+k_2^2) dx.
\]
Then, the star-shaped flow of curves in the cone given by the evolution
\begin{equation}\label{geoKdV}
\bu_t = \begin{pmatrix} A^{-1}-\hu\xi^T & \hu\\ -u_3\xi^T & u_3\end{pmatrix}\begin{pmatrix} k_1\\ k_2\\ k_1'\end{pmatrix} 
\end{equation}
preserves arc-length and, if the initial condition is parametrized by arc-length, then it induces a complexly coupled system of KdV equations on the invariants $k_1, k_2$.
\end{theorem}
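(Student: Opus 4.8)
The plan is to obtain this theorem as the synthesis of the machinery already built: I would recognize (\ref{geoKdV}) as the single instance of the general invariant evolution (\ref{uevfor}) obtained by taking $(r_1,r_2,r_3)=(k_1,k_2,k_1')$, and then extract its two asserted properties from (\ref{alpre}), (\ref{kev})--(\ref{P}) and Theorem \ref{greal}.

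For arc-length preservation, I would substitute $r_1 = k_1$ into the structure-equation identity $(k_0)_t = r_1' + r_3 k_0$ that precedes (\ref{alpre}); the coefficient $r_3$ appearing in (\ref{geoKdV}) is exactly the one dictated by the arc-length condition (\ref{alpre}) once $r_1$ is fixed, so $(k_0)_t = 0$ and $\|u'\|_J$ is preserved pointwise in $t$. Hence an initial curve with $\|u'\|_J = 1$ stays on the slice $k_0 = 1$ for all $t$. The one delicate point here is the sign and normalization bookkeeping in $r_3$, which is tied to the $N = -\rho_t\rho^{-1}$ convention fixed earlier and must be carried consistently.

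Once the flow is confined to $k_0 = 1$, I would invoke (\ref{P}), the specialization of the induced invariant evolution (\ref{kev}) to $k_0 = 1$, to conclude $\binom{k_1}{k_2}_t = P\binom{k_1}{k_2}$. Since $k_1 = \frac{\delta h}{\delta k_1}$ and $k_2 = \frac{\delta h}{\delta k_2}$ for the quadratic Hamiltonian $h$, the right-hand side is the Hamiltonian vector field of $h$ relative to the geometric Poisson operator $P$; as $P$ is a Hamiltonian structure for the complexly coupled KdV system, this identifies the induced dynamics as that system.

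I expect the real work, and the main obstacle, to be verifying that (\ref{geoKdV}) is genuinely the $C_L$-geometric realization of this Hamiltonian flow, i.e. checking the compatibility condition (\ref{compatibility}) of Theorem \ref{greal}. Concretely, one must decompose the variational derivative (\ref{HL}) of the extension $\HH_L$ along $\g = \h\oplus\m$ using the section $\s$, evaluate it at $h_0 = 0$, $h_1 = k_1$, $h_2 = k_2$, $k_0 = 1$, and confirm that its $\m$-component equals $d\s(o)(k_1,k_2,k_1')^T$, in particular that the third slot reproduces $k_1'$ with the correct factor of two and sign. This is the one place where the abstract reduction of Theorem \ref{Kred} and the explicit curve evolution (\ref{geoKdV}) must be matched term by term; everything else is direct substitution into results already established.
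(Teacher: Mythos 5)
Your proposal is correct and follows essentially the same route as the paper: the paper's entire proof is the direct substitution of $(r_1,r_2,r_3)=(k_1,k_2,k_1')$ into the induced evolution (\ref{kev}) with $k_0=1$, which yields $(k_1)_t=-k_1'''+3k_1k_1'+3k_2k_2'$, $(k_2)_t=k_2'''+k_1'k_2-k_1k_2'$, i.e.\ $P(k_1,k_2)^T$. Two remarks. First, the step you anticipate as ``the real work'' --- checking the compatibility condition (\ref{compatibility}) of Theorem \ref{greal} --- is not actually needed for this statement: equation (\ref{P}) already expresses the induced evolution of $(k_1,k_2)$ directly in terms of $(r_1,r_2)$ for any arc-length--preserving invariant flow, so putting $(r_1,r_2)=(k_1,k_2)=\left(\frac{\delta h}{\delta k_1},\frac{\delta h}{\delta k_2}\right)$ immediately identifies the induced dynamics as the $P$-Hamiltonian flow of $h$, without returning to the reduction machinery. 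Second, the sign bookkeeping you flag is a genuine issue, but it sits in the paper rather than in your argument: (\ref{alpre}) gives $r_3=-\frac{1}{k_0}r_1'$, whereas (\ref{geoKdV}) (and the theorem of Section 5) carries $r_3=+k_1'$; the coupled KdV system displayed in the paper's proof is the one produced by $r_3=-k_1'$ via (\ref{P}), so one of the two signs is a typo and your insistence on carrying the $N=-(\rho_L)_t\rho_L^{-1}$ convention consistently is exactly the right safeguard.
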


\begin{proof}
Notice that, according to (\ref{kev}), and since $k_0 = 1$, the evolution   on $k_1, k_2$ induced by this curve evolution is 
\[
\begin{array}{ccc}(k_1)_t &=& -k_1'''+3 k_1k_1' + 3 k_2k_2'\\
(k_2)_t &=& k_2''' +k_1'k_2 - k_1k_2'
\end{array}
\]
that is, a complexly coupled KdV system.
\end{proof}

This system of equations is known to be completely integrable and biHamiltonian; that is, Hamiltonian with respect to two different but compatible Poisson structures (compatible means that their sum is also Poisson). Both brackets (\ref{br1}) and (\ref{br2}) are Poisson and compatible for any choice of $L_0$. To finish the paper we will show that, indeed, (\ref{br2}) is also reducible to both $\K_L$ and $\K_M$ (the $\K_M$ case was already proved in \cite{M2}), its reduction also foliates according to different values of $k_0$, and when $k_0 = 1$ both cases are identical. Notice that there are examples (\cite{M4}) when (\ref{br2}) is {\it not} reducible to $\K$: when $G = \SL(4,\R)$ and $H$ is chosen so that $G/H$ is the Lagrangian Grassmannian, (\ref{br2}) does not reduce for {\it any} value of $L_0$. Only the reduction of (\ref{br1}) is guaranteed.
\begin{theorem} The Poisson bracket (\ref{br2}) is reducible to both $\K_L$ and $\K_M$. The reduced bracket on $\K_L$ foliates into Poisson submanifolds corresponding to different constant values of $k_0$, and when $k_0 = 1$ the Poisson submanifold $\K_L^1$ is equivalent to $\K_M$.
\end{theorem}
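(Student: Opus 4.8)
The plan is to mirror, for the second bracket (\ref{br2}), exactly the reduction argument that worked for (\ref{br1}) in the previous theorem, and then to show that the resulting reduced bracket restricts cleanly to the affine slice $k_0 = 1$. Recall from Theorem \ref{Kred} and the surrounding discussion that reducing a bracket on $\Lo\oa(3,1)^\ast$ to $\K_L \cong U/\Lo H_L$ requires, for each Hamiltonian $h:\K_L\to\R$, an extension $\HH_L$ whose variational derivative $\frac{\delta\HH_L}{\delta L}(K)$ is constant on the gauge leaves of $\Lo H_L$. The key point is that the constancy condition for (\ref{br2}) is \emph{the same linear system} as for (\ref{br1}): the bracket (\ref{br2}) is a constant-coefficient limit of (\ref{br1}) obtained by replacing $L$ with a fixed $L_0$, but the annihilator condition on the extension is governed by $H_L$ and the form of $\frac{\delta\HH_L}{\delta L}(K)$ in (\ref{HL}), not by which bracket we ultimately evaluate. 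So I would reuse the \emph{same} extension (\ref{HL}), with the same solved entries
\[
c = \frac1{2k_0}h_1', \quad d = -\frac1{2k_0}h_2', \quad e = \frac1{k_0}\left(\frac1{2k_0}h_2'\right)' + \frac1{2k_0}(k_2h_1 - k_1h_2),
\]
and establish reducibility of (\ref{br2}) to $\K_L$ by verifying that this extension is well-defined (single-valued, periodic) and that (\ref{br2}) evaluated on two such extensions depends only on their values on $\K_L$.

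Concretely, I would first pick the constant element $L_0 \in \oa(3,1)^\ast$; the natural choice is the image under the trace identification of the Maurer-Cartan matrix $K_L$ at $k_0 = 1$, $k_1 = k_2 = 0$, i.e. the ``vacuum'' frame, so that (\ref{br2}) becomes a linearization of (\ref{br1}) about that point and the companion-matrix structure is preserved. Then the reducibility of (\ref{br2}) to $\K_L$ follows by the same mechanism as in Theorem \ref{Kred}: one checks that $\mathrm{ad}^\ast(\frac{\delta\HH_L}{\delta L})(L_0)$ pairs trivially with any variation $\frac{\delta\F_L}{\delta L} - \frac{\delta\F_L}{\delta L}$ that differs by a tangent to the gauge leaf, which is exactly the statement that the extension lands in the correct annihilator. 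The explicit reduced bracket is then obtained by substituting into (\ref{br2}):
\[
\{h,f\}_2(\bok) = \int_{S^1} \mathrm{trace}\left(\frac{\delta\F_L}{\delta L}(K)\,\mathrm{ad}^\ast\!\left(\frac{\delta\HH_L}{\delta L}(K)\right)(L_0)\right) dx,
\]
and grinding through the commutator with the constant $L_0$ yields a matrix operator $P_0$ with \emph{constant} coefficients (the $k_i$-dependence drops because $L_0$ is constant), which is the second of the two compatible Hamiltonian operators for complexly coupled KdV.

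Next I would read off the foliation: since $L_0$ is constant, the $(0,0)$-type entries that produced the $\frac{1}{k_0}$ and $k_i/k_0$ terms in the first bracket are now replaced by their vacuum values, so the reduced operator for (\ref{br2}) is block-diagonal in the $(h_1,h_2)$ variables and \emph{annihilates} the $h_0$ (i.e. $k_0$) direction, exactly as in (\ref{P}) where the first row and column of the first operator vanish. This is precisely the statement that the $k_0$-level sets are Poisson submanifolds, so the bracket foliates by constant $k_0$; setting $k_0 = 1$ gives $\K_L^1$, and I would verify by direct comparison that the restricted operator $P_0$ coincides term-for-term with the reduction of (\ref{br2}) to $\K_M$ computed in \cite{M2}, using the invariant identifications $k_i \leftrightarrow \kappa_i$ established in Section 5 and the inclusion $H_L \subset H_M$. \textbf{The main obstacle} I anticipate is not the reduction of (\ref{br2}) to $\K_L$, which is formal, but rather verifying that this reduction genuinely \emph{restricts} to the slice $k_0 = 1$ rather than merely inducing a bracket there: I must confirm that $\K_M \cong U/\Lo H_M$ sits inside $U/\Lo H_L$ as a \emph{Poisson} submanifold for the second bracket, i.e. that the Hamiltonian vector field of any $h$ whose extension ignores $h_0$ stays tangent to $k_0 = 1$. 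The earlier warning about the Lagrangian Grassmannian (\cite{M4}), where (\ref{br2}) fails to reduce at all, signals that this step is where the argument could break, and I would check it by confirming that the constant-coefficient operator $P_0$ has a vanishing $k_0$-row, so that $(k_0)_t = 0$ is automatic and the restriction is consistent, matching the corresponding calculation for $\K_M$.
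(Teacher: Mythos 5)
Your overall strategy---reuse the extension (\ref{HL}) with the entries $c,d,e$ already solved from the gauge condition, pick a constant $L_0$, evaluate (\ref{br2}) on two such extensions, and read the foliation off the vanishing $k_0$-row of the resulting operator---is exactly the paper's. But there are two concrete problems. First, the choice of $L_0$ is not a matter of taste here, and yours is, read literally, the wrong one. The paper takes
\[
L_0 = \begin{pmatrix} 0&0&0\\ e_1&0&0\\ 0&e_1^T&0\end{pmatrix},
\]
a nilpotent element in the $w$-direction of (\ref{alg}). Your ``vacuum'' Maurer--Cartan matrix ($k_0=1$, $k_1=k_2=0$) sits in the $z$-direction, which under the trace pairing is \emph{dual} to, not equal to, the $w$-direction. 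Since
\[
\mathrm{trace}\left(\frac{\delta\F_L}{\delta L}\left[L_0,\frac{\delta\HH_L}{\delta L}\right]\right) = \mathrm{trace}\left(\left[\frac{\delta\HH_L}{\delta L},\frac{\delta\F_L}{\delta L}\right]L_0\right),
\]
the paper's $L_0$ extracts the $z$-component of the commutator, which involves only $c,d,h_1,h_2$ and yields $\frac1{2k_0}(h_1f_1'-f_1h_1'-h_2f_2'+f_2h_2')$; your $L_0$ would extract the $w$-component, which involves $h_0$, $f_0$ and the entries $e_h,e_f$ (hence $k_1,k_2$ and second derivatives). That operator does not annihilate the $h_0$ direction, so both the foliation by constant $k_0$ and the identification with the second Hamiltonian structure of complexly coupled KdV would fail for your choice. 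Second, your assertion that the reduced operator has constant coefficients is false: the reduced tensor is $P_0$ with diagonal blocks $-D\frac1{k_0}-\frac1{k_0}D$ and $D\frac1{k_0}+\frac1{k_0}D$, so it carries $\frac1{k_0}$ throughout and becomes constant only after restricting to the slice $k_0=1$.

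A smaller but real issue: you invoke ``the same mechanism as Theorem \ref{Kred}'' to get reducibility of (\ref{br2}), but that theorem only guarantees reduction of (\ref{br1}); the paper explicitly warns, citing \cite{M4}, that (\ref{br2}) may fail to reduce for every $L_0$. You do flag this worry at the end, but the check you propose (vanishing $k_0$-row) only addresses the foliation and tangency to $k_0=1$, not reducibility itself. What the paper actually does is compute the candidate operator $P_0$ explicitly and then verify directly that it is a Poisson tensor; that verification, short as it is for an operator of this form, is the step that discharges the reducibility claim and cannot be replaced by an appeal to Theorem \ref{Kred}.
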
  
\begin{proof}
To calculate the possible reduction we need to proceed as in the case of the main bracket. If $h:\K_L \to \R$, and if $\HH_L$ is an extension of $h$, constant on the leaves of $\Lo H_L$, then $\frac{\delta \HH_L}{\delta L}(K_L)$ is determined to be as in (\ref{HL}). If $\F_L$ is a similar extension of a Hamiltonian functional $f$, the possible reduction of (\ref{br2}) would then be given by
\[
\{h,f\}_0(\bok) = \int_{S^1}\mathrm{trace}\left(\frac{\delta \F_L}{\delta L}(K_L)\left[L_0, \frac{\delta \HH_L}{\delta L}(K_L)\right]\right)dx.
\]
Let us choose 
\[
L_0 = \begin{pmatrix} 0& 0&0\\ e_1&0&0\\ 0&e_1^T&0\end{pmatrix}.
\]
With this choice, the possible reduction looks like (we are denoting by $d_h, d_f$ the entries in (\ref{HL}) corresponding to the extensions of $h$ and $f$, respectively. Likewise for the other entries.)
\[
\{h, f\})(\bok) = \int_{S^1} (h_1 c_f- f_1 c_h+h_2 d_f- f_2 d_h) dx =\int_{S^1} \frac1{2k_0}(h_1 f_1'-f_1h_1'-h_2 f_2'+f_2h_2') dx
\]
\[
= \frac12\int_{S^1} \begin{pmatrix} f_0&f_1&f_2\end{pmatrix} P_0 \begin{pmatrix} h_0\\ h_1\\ h_2\end{pmatrix}
\]
where
\[
P_0 = \begin{pmatrix} 0&0&0&\\ 0& -D\frac 1{k_0} - \frac1{k_0} D&0\\ 0&0& D\frac 1{k_0} + \frac1{k_0} D \end{pmatrix}.
\]
It is a very simple exercise to prove that $P_0$ is a Poisson tensor, it preserves constant values of $k_0$, and when $k_0 = 1$, the bracket restricts to 
\[
\begin{pmatrix} -D&0\\ 0&D\end{pmatrix}.
\]
This is the second Hamiltonian structure for the complexly coupled KdV system.
\end{proof}

Notice that if we have a solution of the complexly coupled system - including soliton solutions - we can directly find a solution for the curve evolution (\ref{geoKdV}) using the moving frame $\rho_L$ along $u(t,x)$. Indeed, if $k_i(t,x)$ is such a solution we can re-construct $\rho_L$. The frame $\rho_L$ will be the solution of the linear system $(\rho_L)_x = \rho_L K_L$ with appropriate initial conditions determined by the initial conditions of the KdV solution. Once we solve this linear system we can readily find a solution of (\ref{geoKdV}) through the relation $\rho_L\cdot e_4 = u$. This process allows us to describe soliton flows in the cone by simply solving a first order linear system, instead of a system of two third order ODEs.

\end{document}